\newtheorem{theorem}{Theorem}[section]
\newtheorem{definition}[theorem]{Definition}
\newtheorem{lemma}[theorem]{Lemma}
\newtheorem{proposition}[theorem]{Proposition}
\newtheorem{remark}[theorem]{Remark}
\title{\bf Differential topological aspects in octonionic monogenic function theory} 
\author{
Rolf S\"{o}ren Krau{\ss}har\\
Fachbereich Mathematik\\
Erziehungswissenschaftliche Fakult\"at\\
Universit\"at Erfurt\\
Nordh\"auser Str. 63\\
99089 Erfurt, Germany\\
soeren.krausshar@uni-erfurt.de }
\begin{document}
\maketitle
\begin{abstract}  
	In this paper we apply a homologous version of the Cauchy integral formula for octonionic monogenic functions to introduce for this class of functions the notion of multiplicity of zeroes and $a$-points in the sense of the topological mapping degree. As a big novelty we also address the case of zeroes lying on certain classes of compact zero varieties. This case has not even been studied in the associative Clifford analysis setting so far.  
	We also prove an argument principle for octonionic monogenic functions for  isolated zeroes and for non-isolated compact zero sets. In the isolated case we can use this tool to prove a generalized octonionic Rouch\'e's theorem by a homotopic argument. As an application we set up a generalized version of Hurwitz theorem which is also a novelty even for the Clifford analysis case. 
	     
\end{abstract}
{\bf Keywords}: octonions, winding numbers, argument principle, Rouch\'e's theorem, Hurwitz theorem, isolated and non-isolated zeroes \\[0.1cm] 
\noindent {\bf Mathematical Review Classification numbers}: 30G35\\

\section{Introduction}  

Especially during the last three years one notices a significant further boost of interest in octonionic analysis both from mathematicians and from theoretical physicists, see for instance \cite{JRS,KO2018,KO2019,Kra2019-1,Nolder2018}. 
In fact, many physicists currently believe that the octonions provide the adequate setting to describe the symmetries arising in a possible unified world theory combining the standard model of particle physics and aspects of supergravity. See also \cite{Kra2019-2} for the references therein. 

\par\medskip\par

Already during the 1970s, but particularly in the first decade of this century, a lot  of effort has been made to carry over fundamental tools from Clifford analysis to the non-associative octonionic setting. 

Many analogues of important theorems from Clifford analysis could also be established in the non-associative setting, such as for instance a Cauchy integral formula or Taylor and Laurent series representations involving direct analogues of the Fueter polynomials, see for example \cite{Imaeda,Nono,XL2000,XL2001,XL2002,XZL}. Of course, one carefully has to put parenthesis in order to take care of the non-associative nature. 

Although some of these fundamental theorems formally look very similar to those in the associative Clifford algebra setting, Clifford analysis and octononic analysis are two different function theories.  

In \cite{KO2018,KO2019} the authors describe a number of substantial and structural   differences between the set of Clifford monogenic functions from $\mathbb{R}^8 \to Cl_8 \cong  \mathbb{R}^{128}$ and the set of octonionic monogenic functions from $\mathbb{O} \to \mathbb{O}$. This is not only reflected in the different mapping property, but also in the fact that unlike in the Clifford case, left octonionic monogenic functions do not form an octonionic right module anymore. 

The fact that one cannot interchange the parenthesis arbitrarily in a product of octonionic expressions does not permit to carry over a number of standard arguments from the Clifford analysis setting to the octonionic setting. 

In this paper we depart from the octonionic Cauchy integral formula for left or right octonionic monogenic functions, taking special care of the non-associativity by bracketing the terms together in a particular way. First we derive a topological generalized version of this Cauchy integral formula involving the winding number of $7$-dimensional hypersurfaces in the sense of the Kronecker index. From the physical point of view this winding number represents the fourth Chern number of the $G_2$-principal bundles that arise in the application of a generalization of 't Hoofd  ansatz to construct special solutions of generalized $G_2$-Yang-Mills gauge fields, see \cite{Burdik,GTBook}.

This homological version of Cauchy's integral formula is the starting point to introduce first the notion of the order of an isolated zero, or more generally, of an isolated $a$-point of a left (right) octonionic monogenic function. This notion of the order represents the topological mapping degree counting how often the image of a small sphere around zero (or around an arbitrary point $a$) wraps around zero (or $a$, respectively). An application of the transformation formula then leads to an explicit argument principle for isolated zeroes and $a$-points of octonionic monogenic functions. On the one-hand this argument principle naturally relates the fundamental solution of the octonionic Cauchy-Riemann equation with the fourth Chern number of the $G_2$-principal bundles that are related to special solutions of the $G_2$-Yang-Mills equation from 't Hoofd' ansatz. However, this topic will be investigated in detail in one of our follow-up papers. 

On the other hand this argument principle allows us to establish a generalization of Rouch\'e's theorem using a classical homotopy argument. 

In turn, this version of Rouch\'e's theorem enables us to prove that the limit function of a normally convergent sequence of octonionic monogenic functions that have no isolated $a$-points inside an octonionic domain either vanishes identically over the whole domain or it satisfies $\sum_{c \in C}{\rm ord}(f;c)=0$. Note that this statement is slightly weaker than the classical Hurwitz theorem, because in the higher dimensional cases the condition ${\rm ord}(f;c)=0$  does not immediately mean that $f(c)\neq 0$. It is a sufficient but not necessary condition for being zero-free. Anyway, this statement is also new for the associative Clifford analysis setting, of course one has to restrict oneself to paravector-valued functions when addressing this case. 

A big goal and novelty of this paper consists in addressing also the context of  non-isolated zeroes and $a$-points which lie on special simply-connected compact manifolds of dimension $k \in \{1,\ldots,6\}$. Instead of taking small spheres,  the adequate geometric tool is the use of tubular domains that surround  these zero or $a$-point varieties. This geometrical setting allows us to introduce the winding number of a surface wrapping around such a compact zero or $a$-point variety and gives a meaningful definition for the order of a zero variety of an octonionic monogenic function. We also manage to establish an argument principle for these classes of non-isolated zero varieties. These results are even new for the associative Clifford analysis setting and can also be applied to left and right monogenic paravector valued functions in $\mathbb{R}^{n+1}$ for general dimensions $n \in \mathbb{N}$.

To finish we would like to mention that octonions also offer an alternative function theory of octonionic slice-regular functions, see for example \cite{GPzeroes,GP,JRS}.There are of course also connections between octonionic slice-regular functions and octonionic solutions of the generalized octonionic Cauchy-Riemann equations. In the slice-regular context one even gets explicit relations between poles and zeroes as well as a simpler classification of zeroes in a very general situation. In the slice-regular setting only isolated and spherical zeroes can appear and their multiplicity can simply be described in terms of a power exponent appearing in a factorization that makes use of the so-called slice-product. This is a very prosperous direction for developing further powerful function theoretical tools to address problems in the octonionic setting. Note that slice-regular functions also are connected with concrete physical applications, see for instance \cite{Burdik}, in particular also in the construction of special solutions of 't Hoofd ansatz of $G_2$-Yang-Mills solutions.   

However, in this paper we entirely restrict ourselves to solutions of the octonionic Cauchy-Riemann equation, but it is an interesting challenge to pay more attention to  these topics in the framework of other octonionic generalized function theories.

 \section{Basic notions of octonions}
 
 The octonions form an eight-dimensional real non-associative normed division algebra over the real numbers. They serve as a confortable number system to describe the symmetries in recent unifying physical models connecting the standard model of particle physics and supergravity,  see \cite{Burdik,G}. 

Following \cite{Baez,WarrenDSmith} and others, the octonions can be constructed by the usual Cayley-Dickson doubling process. The latter is initiated by taking two pairs of complex numbers $(a,b)$ and $(c,d)$ and forming an addition and multiplication operation by  $$
 (a,b)+(c,d) :=(a+c,b+d),\quad\quad (a,b)\cdot (c,d) := (ac-d\overline{b},\overline{a}d+cb) 
 $$ 
 where $\overline{\cdot}$ denotes the conjugation (anti-)automorphism  which will be extended by $\overline{(a,b)}:=(\overline{a},-b)$ to the set of pairs $(a,b)$. 
 
 In the first step of this doubling procedure we get the real quaternions $\mathbb{H}$. Each quaternion can be written in the form $z=x_0 + x_ 1e_1 + x_2 e_2 + x_3 e_3$ where $e_i^2=-1$ for $i=1,2,3$ and $e_1 e_2 = e_3$, $e_2 e_3 = e_1$, $e_3 e_1 = e_2$ and $e_i e_j = - e_j e_i$ for all mutually  distinct $i,j$ from $\{1,2,3\}$. Already the commutativity has been lost in this first step of the doubling process. However,  $\mathbb{H}$ is still associative.
 
 The next duplification in which one considers pairs of quaternions already leads to the octonions $\mathbb{O}$ which are not even associative anymore. However, in contrast to Clifford algebras, the octonions still form a division algebra. In real coordinates octonions can be expressed in the form  
 $$
 z = x_0 + x_1 e_1 + x_2 e_2 + x_3 e_3 + x_4 e_4 + x_5 e_5 + x_6 e_6 + x_7 e_7
 $$
 where $e_4=e_1 e_2$, $e_5=e_1 e_3$, $e_6= e_2 e_3$ and $e_7 = e_4 e_3 = (e_1 e_2) e_3$. 
 Like in the quaternionic case, we have $e_i^2=-1$ for all $i =1,\ldots,7$ and $e_i e_j = -e_j e_i$ for all mutual distinct $i,j \in \{1,\ldots,7\}$. Their mutual multiplication is illustrated as follows,   
\begin{center}
 \begin{tabular}{|l|rrrrrrr|}
 $\cdot$ & $e_1$&  $e_2$ & $e_3$ & $e_4$ & $e_5$ & $e_6$  & $e_7$ \\ \hline
 $e_1$  &  $-1$ &  $e_4$ & $e_5$ & $-e_2$ &$-e_3$ & $-e_7$ & $e_6$ \\
 $e_2$ &  $-e_4$&   $-1$ & $e_6$ & $e_1$ & $e_7$ & $-e_3$ & $-e_5$ \\
 $e_3$ &  $-e_5$& $-e_6$ & $-1$  & $-e_7$&$e_1$  & $e_2$  & $e_4$ \\
 $e_4$ &  $e_2$ & $-e_1$ & $e_7$ & $-1$  &$-e_6$ & $e_5$  & $-e_3$\\
 $e_5$ &  $e_3$ & $-e_7$ & $-e_1$&  $e_6$&  $-1$ & $-e_4$ & $e_2$ \\
 $e_6$ &  $e_7$ &  $e_3$ & $-e_2$& $-e_5$& $e_4$ & $-1$   & $-e_1$ \\
 $e_7$ & $-e_6$ &  $e_5$ & $-e_4$& $e_3$ & $-e_2$& $e_1$  & $-1$ \\ \hline 	
 \end{tabular}
\end{center}
Fortunately, the octonions still form an alternative and composition algebra. 

In particular, the Moufang rule $(ab)(ca) = a((bc)a)$ holds for all $a,b,c \in \mathbb{O}$. In the special case $c=1$, one obtains the flexibility condition $(ab)a= a(ba)$.  

Let $a = a_0 + \sum\limits_{i=1}^7 a_i e_i$ be an octonion represented with the seven imaginary units as mentioned above. We call $a_0$ the real part of $a$ and write $\Re{a} = a_0$. The conjugation leaves the real part invariant, but $\overline{e_j}=-e_j$ for all $j =1,\ldots,7$. On two general octonions $a,b \in \mathbb{O}$ one has $\overline{a\cdot b} = \overline{b}\cdot \overline{a}$. 

The Euclidean norm and the Euclidean scalar product from $\mathbb{R}^8$ naturally extends to the octonionic case by $\langle a,b \rangle := \sum\limits_{i=0}^7 a_i b_i = \Re\{a \overline{b}\}$ and $|a|:= \sqrt{\langle a,a\rangle} = \sqrt{\sum\limits_{i=0}^7 a_i^2}$. We have the important norm composition property $|a\cdot b|=|a|\cdot|b|$ for all $a,b \in \mathbb{O}$. Every non-zero element $a \in \mathbb{O}$ is invertible with $a^{-1} =\overline{a}/|a|^2$. 

The famous theorems of Frobenius and Hurwitz theorem tell us that 
$\mathbb{R},\mathbb{C},\mathbb{H}$ and $\mathbb{O}$ are the only real normed division algebras.

Further important rules are  
$$
(a\overline{b})b = \overline{b}(ba) =a(\overline{b}b)=a(b \overline{b})
$$  
for all $a,b \in \mathbb{O}$ and, 
$\Re\{b(\overline{a}a)c\} =\Re\{(b \overline{a})(ac)\}$ for all $a,b,c \in \mathbb{O}$, as stated for instance in \cite{CDieckmann} Proposition 1.6. 

We also use the notation $B_8(z,r) :=\{z \in \mathbb{O} \mid |z| < r\}$ and $\overline{B_8(z,r)} :=\{z \in \mathbb{O} \mid |z| \le r\}$ for the eight-dimensional solid open and closed ball of radius $r$ in the octonions and $S_7(z,r)$ for the seven-dimensional sphere $S_7(z,r) :=\{z \in \mathbb{O} \mid |z| = r\}$. If $z=0$ and $r=1$ then we denote the unit ball and unit sphere by $B_8$ and $S_7$, respectively. The notation $\partial B_8(z,r)$ means the same as $S_7(z,r)$.

\section{Argument principle for isolated zeroes of octonionic monogenic functions}
We start this section by recalling the definition of octonionic regularity or octonionic monogenicity in the sense of the Riemann approach. From \cite{Imaeda,XL2000} and elsewhere we quote 
\begin{definition}
	Let $U \subseteq \mathbb{O}$ be an open set. A real differentiable function $f:U \to \mathbb{O}$ is called left (right) octonionic monogenic or equivalently left (right) ${\mathbb{O}}$-regular for short if it satisfies ${\cal{D}} f = 0$ or $f {\cal{D}} = 0$ where $
	{\cal{D}}:= \frac{\partial }{\partial x_0} + \sum\limits_{i=1}^7 e_i \frac{\partial }{\partial x_i}$ stands for the octonionic Cauchy-Riemann operator,  
	where $e_i$ are the octonionic units like defined in the preliminary section before. 
\end{definition}
 In contrast to the associative Clifford analysis setting, the set of left (right) ${\mathbb{O}}$-regular functions do not form an ${\mathbb{O}}$-right (left) module. The following example given in \cite{KO2019} provides a counter-example. Take the function $f(z):= x_1 - x_2 e_4$. A direct computation gives ${\cal{D}}[f(z)] = e_1 - e_2 e_4 = e_1 - e_1 = 0$. But the function $g(z):=(f(z))\cdot e_3 = (x_1 - x_2 e_4) e_3 = x_1 e_3 - x_2 e_7$ satisfies ${\cal{D}}[g(z)] = e_1 e_3 - e_2 e_7 = e_5 -(-e_5) = 2 e_5 \neq 0$. It is clearly the lack of associativity that destroys the modular structure of ${\mathbb{O}}$-regular functions. 
 This is one significant structural difference to Clifford analysis. However, note  that the composition with an arbitrary translation of the form $z \mapsto z + \omega$ where $\omega \in \mathbb{O}$ still preserves monogenicity also in the octonionic case, i.e. ${\cal{D}}f(z+\omega) = 0$ if and only if ${\cal{D}}f (z) = 0$. This is a simple consequence of the chain rule, because the differential remains invariant under an arbitrary octonionic translation.
 
 An important property of left or right ${\mathbb{O}}$-regular functions is that they satisfy the following Cauchy integral theorem, cf. for instance \cite{XL2002}: 
 \begin{proposition} (Cauchy's integral theorem)\\
Let $G \subseteq \mathbb{O}$ be a bounded $8$-dimensional connected star-like domain with an orientable strongly Lipschitz boundary $\partial G$. Let $f \in C^1(\overline{G},\mathbb{O})$. If $f$ is left (resp.) right $\mathbb{O}$-regular inside of $G$, then 
$$
\int\limits_{\partial G} d\sigma(z) f(z) = 0,\quad {\rm resp.}\;\;\int\limits_{\partial G} f(z) d\sigma(z) = 0
$$  	
where $d\sigma(z) = \sum\limits_{i=0}^7 (-1)^j e_i \stackrel{\wedge}{d x_i} = n(z) dS(z)$, where $\stackrel{\wedge}{dx_i} = dx_0 \wedge dx_1 \wedge \cdots dx_{i-1} \wedge dx_{i+1} \cdots \wedge dx_7$ and where $n(z)$ is the outward directed unit normal field at $z \in \partial G$ and $dS =|d \sigma(z)|$ the ordinary scalar surface Lebesgue measure of the $7$-dimensional boundary surface.  
 \end{proposition}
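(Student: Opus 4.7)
My plan is to derive the identity as a direct consequence of Stokes' theorem applied to the $\mathbb{O}$-valued $7$-form $\omega(z) := d\sigma(z) f(z)$ on $G$. The key observation that makes non-associativity harmless at this stage is that the integrand only ever involves products of the shape $e_i\, f(z)$, a single constant octonionic unit multiplied by the function value. No triple product requiring a choice of bracketing appears anywhere in the computation, so the standard Clifford-analytic argument transfers verbatim.

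First I would expand
$$\omega(z) = \sum_{i=0}^{7}(-1)^{i}\,\bigl(e_{i}\,f(z)\bigr)\,\stackrel{\wedge}{dx_{i}}$$
and compute its exterior derivative term by term. Since $e_{i}$ is a constant and $\stackrel{\wedge}{dx_{i}}$ omits $dx_{i}$, only the $x_{i}$-partial of $f$ survives from each $df$, giving
$$d\!\left((-1)^{i}(e_{i}f)\stackrel{\wedge}{dx_{i}}\right) = (-1)^{i}\bigl(e_{i}\,\partial_{x_{i}}f\bigr)\,dx_{i}\wedge \stackrel{\wedge}{dx_{i}}.$$
Using $dx_{i}\wedge \stackrel{\wedge}{dx_{i}} = (-1)^{i}\,dV$ with $dV := dx_{0}\wedge dx_{1}\wedge\cdots\wedge dx_{7}$, the signs cancel and summation collapses to
$$d\omega(z) \;=\; \left(\sum_{i=0}^{7}e_{i}\,\partial_{x_{i}}f\right)dV \;=\; \bigl({\cal D}f\bigr)(z)\,dV.$$

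Next I would invoke Stokes' theorem for strongly Lipschitz $8$-chains, applied componentwise to the eight real scalar components of the $\mathbb{O}$-valued form, to obtain
$$\int\limits_{\partial G}d\sigma(z)\,f(z) \;=\; \int\limits_{G} d\omega \;=\; \int\limits_{G}\bigl({\cal D}f\bigr)(z)\,dV \;=\; 0,$$
the last equality holding because $f$ is left $\mathbb{O}$-regular on $G$. The right-regular statement is handled identically: setting $\omega'(z) := f(z)\,d\sigma(z) = \sum_{i}(-1)^{i}\bigl(f(z)\,e_{i}\bigr)\stackrel{\wedge}{dx_{i}}$ and running the same computation (the $e_{i}$ now on the right) yields $d\omega' = (f{\cal D})(z)\,dV$, which vanishes by hypothesis.

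The main, and essentially only, technical point is the validity of Stokes' theorem in the strongly Lipschitz category, which is classical and well-documented in the geometric measure theory literature. The non-associativity of $\mathbb{O}$ never intervenes since each $7$-form appearing in the argument carries at most one octonionic factor alongside $f$. It is worth noting that the star-shapedness hypothesis on $G$ is actually not needed for this particular statement: boundedness of $G$ together with the strongly Lipschitz condition on $\partial G$ is sufficient. The star-shapedness is presumably recorded here in view of the subsequent constructions in the paper (homotopies, tubular neighborhoods of zero varieties) rather than for the integral theorem itself.
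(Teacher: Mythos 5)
Your proof is correct and is exactly the standard Stokes'-theorem argument: the paper itself offers no proof of this proposition, simply quoting it from the literature (\cite{XL2002}), and the computation $d\bigl(d\sigma(z)f(z)\bigr)=({\cal D}f)(z)\,dV$ together with componentwise Stokes is precisely how it is established there. Your observations that non-associativity is harmless here (only two-factor products $e_i f(z)$ occur) and that star-shapedness is not actually needed for this particular statement are both accurate.
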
 
 
 An important left and right ${\mathbb{O}}$-regular function is the function $q_{\bf 0}: \mathbb{O} \backslash\{0\} \to \mathbb{O},\;q_{\bf 0}(z) := \frac{x_0 - x_1 e_1 - \cdots - x_7 e_7}{(x_0^2+x_1^2+\cdots + x_7^2)^4} = \frac{\overline{z}}{|z|^8}$ whose only singular point is an isolated point singularity of order $7$ at the origin. This function serves as Cauchy kernel in the following Cauchy integral formula for ${\mathbb{O}}$-regular functions. Before we recall this formula, we point out another essential difference to the associative setting: 
 \begin{remark}
 	As already mentioned in {\rm \cite{GTBook}}, in contrast to quaternionic and Clifford analysis, octonionic analysis does {\em not} offer an analogy of a general Borel-Pompeiu formula of the form 
 	$$
 	\int\limits_{\partial G} g(z) d\sigma(z) f(z) = 0,
 	$$
 	not even if $g$ is right $\mathbb{O}$-regular and $f$ left $\mathbb{O}$-regular, independently how we bracket these terms together. The lack of such an identity is again a consequence of the lack of associativity. However, if one of these functions is the Cauchy kernel, then one obtains a generalization. 
 \end{remark}
  For convenience we recall from \cite{Imaeda,Nono,XL2002}:
 \begin{proposition}
Let $U \subseteq \mathbb{O}$ be a non-empty open set and $G \subseteq U$ be an $8$-dimensional compact oriented manifold with a strongly Lipschitz boundary $\partial G$. If $f: U \to \mathbb{O}$ is left (resp. right) $\mathbb{O}$-regular, then for all $z \not\in \partial G$
$$
\chi(z)f(z)= \frac{3}{\pi^4} \int\limits_{\partial G} q_{\bf 0}(w-z) \Big(d\sigma(w) f(w)\Big),\quad\quad \chi(z) f(z)= \frac{3}{\pi^4} \int\limits_{\partial G}   \Big(f(w)d\sigma(w)\Big) q_{\bf 0}(w-z),
$$
where $\chi(z) = 1$ if $z$ is in the interior of $G$ and $\chi(z)=0$ if $z$ in the exterior of $G$. 
 \end{proposition}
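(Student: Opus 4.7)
The plan is to run the classical Cauchy-formula argument in the two cases $z\notin\overline{G}$ and $z$ in the interior of $G$, handling the non-associativity of $\mathbb{O}$ by a careful bracketing on a small sphere around $z$. I focus on the left-regular identity; the right-regular version is symmetric.

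For $z$ outside $\overline{G}$, the map $w\mapsto q_{\bf 0}(w-z)$ is smooth and both left and right $\mathbb{O}$-regular on a neighbourhood of $\overline{G}$, since translations preserve monogenicity. After decomposing $G$ into finitely many star-like strongly-Lipschitz pieces whose interior interfaces cancel pairwise (a standard move for strongly Lipschitz manifolds), I would apply the Cauchy integral theorem (Proposition~3.2) to each piece with the pair $(q_{\bf 0}(\cdot-z),f)$, obtaining that the boundary integral over $\partial G$ vanishes. This yields $\chi(z)f(z)=0$ as required.

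For $z$ in the interior of $G$, I would choose $\varepsilon>0$ small enough that $\overline{B_8(z,\varepsilon)}\subset G$ and apply the Cauchy integral theorem on the perforated domain $G\setminus\overline{B_8(z,\varepsilon)}$ (again, after a star-like decomposition), giving
$$
\int_{\partial G} q_{\bf 0}(w-z)\bigl(d\sigma(w) f(w)\bigr) \;=\; \int_{S_7(z,\varepsilon)} q_{\bf 0}(w-z)\bigl(d\sigma(w) f(w)\bigr).
$$
On $S_7(z,\varepsilon)$, write $n:=(w-z)/\varepsilon$ for the outward unit normal, so that $q_{\bf 0}(w-z)=\overline{n}/\varepsilon^7$ and $d\sigma(w)=n\,dS(w)$. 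Splitting $f(w)=f(z)+(f(w)-f(z))$, the remainder is controlled by the uniform estimate $|q_{\bf 0}(w-z)|\cdot|d\sigma(w)|\cdot|f(w)-f(z)|=O(\varepsilon^{-7})\cdot O(\varepsilon^{7})\cdot o(1)=o(1)$ as $\varepsilon\to 0$ by continuity, so it suffices to evaluate the leading piece.

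The main obstacle, and where the departure from the Clifford/associative case is felt, is the evaluation of $(\overline{n}/\varepsilon^7)(n\,f(z))$: one cannot a priori pull $f(z)$ out of the bracket. The remedy is alternativity. Every subalgebra of $\mathbb{O}$ generated by two elements is associative, and $\overline{n}=2\Re(n)-n$ lies in the subalgebra generated by the single element $n$, so the product $\overline{n}(n\,f(z))$ is in effect a product in a two-generated, hence associative, subalgebra, and $\overline{n}(n\,f(z))=(\overline{n}n)f(z)=f(z)$. Integrating $f(z)/\varepsilon^7$ over $S_7(z,\varepsilon)$ picks up the surface area $\tfrac{\pi^4}{3}\varepsilon^7$, which cancels $\varepsilon^{-7}$, and multiplication by $\tfrac{3}{\pi^4}$ gives $f(z)$. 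The right-regular case follows from the mirror identity $(f(z)\,n)\overline{n}=f(z)(n\overline{n})=f(z)$, obtained by the same two-generator argument. Thus the proof reduces to placing the non-associativity exactly at the one spot where alternativity of $\mathbb{O}$ neutralises it.
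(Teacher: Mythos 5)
The paper itself gives no proof of this proposition (it recalls it from Imaeda, Nono and Li--Peng), so I am judging your argument against the standard proof in those references. Your treatment of the small sphere is correct and well executed: the identity $\overline{n}(n\,f(z))=(\overline{n}n)f(z)=f(z)$ via Artin's theorem on two-generated subalgebras, the $o(1)$ remainder estimate, and the surface-area constant $\tfrac{\pi^4}{3}\varepsilon^7$ are all exactly right. The gap is at the central step, where you ``apply the Cauchy integral theorem (Proposition~3.2) to each piece with the pair $(q_{\bf 0}(\cdot-z),f)$''. Proposition~3.2 is a \emph{one-function} statement, $\int_{\partial G}d\sigma(w)f(w)=0$; it cannot be applied to a pair. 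What you actually need is the two-function vanishing $\int_{\partial G}g(w)\bigl(d\sigma(w)f(w)\bigr)=0$ for $g$ right-regular and $f$ left-regular, and the paper's Remark~3.3 explicitly warns that this Borel--Pompeiu-type statement is \emph{false} in the octonions for general $g$, however one brackets. So the step you treat as routine is precisely the point where the non-associativity must be confronted, and your proposal confronts it only at the (easier) small-sphere evaluation.

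To close the gap one must compute the exterior derivative of the $7$-form with the stated bracketing:
$$
d\Bigl[\,q_{\bf 0}(w-z)\bigl(d\sigma(w)f(w)\bigr)\Bigr]
=\Bigl(\,(q_{\bf 0}\mathcal{D})f+q_{\bf 0}(\mathcal{D}f)-\sum_{j=0}^{7}\bigl[\partial_j q_{\bf 0}(w-z),e_j,f(w)\bigr]\Bigr)\,dV,
$$
where $[a,b,c]=(ab)c-a(bc)$. The first two terms vanish by the right- and left-regularity, but the associator term does not vanish for a general right-regular $g$; it vanishes for the Cauchy kernel because, writing $q_{\bf 0}=\sum_i g_i e_i$, the matrix $\partial_j g_i$ is symmetric in $i,j\ge 1$ while $[e_i,e_j,\cdot]$ is antisymmetric (and $[1,e_j,\cdot]=0$). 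This computation is the actual content of the proposition --- it is also exactly why the bracketing matters, since with the bracketing $(q_{\bf 0}\,d\sigma)f$ the surviving associator produces the extra volume integral displayed after the proposition. Only after establishing this Stokes identity may one run your exterior-point and perforated-domain arguments; as written, the proposal assumes the conclusion of the hard step.
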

Note that the way how the parenthesis are put is very important. Putting the parenthesis in the other way around, would lead in the left $\mathbb{O}$-regular case to a different formula of the form
$$
\frac{3}{\pi^4} \int\limits_{\partial G} \Big( q_{\bf 0}(w-z) d\sigma(w)\Big) f(w) = \chi(z) f(z) + \int\limits_G \sum\limits_{i=0}^7 \Big[q_{\bf 0}(w-z),{\cal{D}}f_i(w),e_i  \Big] dw_0 \wedge \cdots \wedge dw_7, 
$$
where $[a,b,c] := (ab)c - a(bc)$ stands for the associator of three octonionic elements. The volume integral which appears additionally always vanishes in algebras where one has the associativity, such as in Clifford algebras. 
See \cite{XL2002}. 
 
An important subcase is obtained when we take for $f$ the constant function $f(z) = 1$ for all $z \in U$ which is trivially left and right $\mathbb{O}$-regular. Then the Cauchy integral simplifies to the constant value  
$$
\chi(z) = \frac{3}{\pi^4} \int\limits_{\partial G} q_{\bf 0}(w-z) d\sigma(w),\quad {\rm resp.}\; \chi(z)= \frac{3}{\pi^4} \int\limits_{\partial G}   d\sigma(w) q_{\bf 0}(w-z),
$$
simply indicating if $z$ belongs to the interior or to the exterior component of $\partial G$.  
This is the starting point to introduce a following generalized topological version of the above stated Cauchy integral formula. Following for instance \cite{Hempfling} 
one can consider more generally for $G$ a bounded Lipschitz domain whose boundary $\partial G$ could be a $7$-chain, homologous to a differentiable $7$-chain with image $\partial B(z,r)$, parametrized as 
\begin{equation}\label{param}
\partial G = \{x_0(\lambda_1,\ldots,\lambda_7) + \sum\limits_{i=1}^7 x_i(\lambda_1,\ldots,\lambda_7) e_i\}. 
\end{equation}
In this more general case one has 
$$
w_{\partial G}(z) = \frac{3}{\pi^4} \int\limits_{\partial G} q_{\bf 0}(w-z) d\sigma(w),\quad {\rm resp.}\; w_{\partial G}(z)= \frac{3}{\pi^4} \int\limits_{\partial G}   d\sigma(w) q_{\bf 0}(w-z),
$$
where $w_{\partial G}(z)$ represents the topological winding number, sometimes called the Kronecker-index (cf. \cite{ghs}), counting how often $\partial G$ wraps around $z$. Note that this is a purely topological entity induced by
$$
H_8(\partial G,\partial G - z) \cong H_8(B_8,S_7) \cong \tilde{H}_7(S_7) \cong \mathbb{Z},
$$
where  $H_8$ is the related homology group and $\tilde{H}_7$ the related reduced homology group. Due to this property, the winding number $w_{\partial G}(z)$ is always an integer. This is the basis for the more general topogical version of Cauchy's integral formula: 
\begin{theorem}\label{topcauchy}(Topological generalized octonionic Cauchy integral formula)\\	
Let $U \subseteq \mathbb{O}$ be an open set and $G$ be a closed manifold whose boundary $\Gamma$ is a strongly Lipschitz $7$-chain. If $f:U \to \mathbb{O}$ is left $\mathbb{O}$-regular, then we have the identity
$$
w_{\Gamma}(z)f(z)= \frac{3}{\pi^4} \int\limits_{\Gamma} q_{\bf 0}(w-z) \Big(d\sigma(w) f(w)\Big),\quad z \not\in \Gamma
$$
where $w_{\Gamma}(z)$ is the topological winding number counting how often $\Gamma$ wraps around $z$. The latter equals zero if $z$ in a point from the exterior of $G$.
\end{theorem}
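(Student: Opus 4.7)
The plan is to reduce the theorem to the already-stated Cauchy integral formula in Proposition 3.3 through a standard homological decomposition, taking care to preserve the specific non-associative grouping $q_{\bf 0}(w-z)\bigl(d\sigma(w)f(w)\bigr)$ that makes that formula valid.

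First, I would pick $\varepsilon>0$ small enough so that $\overline{B_8(z,\varepsilon)}\subseteq U$ and $S_7(z,\varepsilon)\cap\Gamma=\emptyset$, and write $n:=w_{\Gamma}(z)\in\mathbb{Z}$. The key homological fact, supplied by the isomorphism $H_8(\partial G,\partial G-z)\cong \tilde H_7(S_7)\cong\mathbb{Z}$ recalled just before the theorem, is that the 7-cycle $\Gamma-n\cdot S_7(z,\varepsilon)$, endowed with the canonical outward orientation, has winding number zero around $z$, hence represents the trivial class in $\tilde H_7\bigl(U\setminus\{z\}\bigr)$. Consequently it bounds a strongly Lipschitz 8-chain $\Omega\subset U\setminus\{z\}$, on which $f$ remains left $\mathbb{O}$-regular and the Cauchy kernel $q_{\bf 0}(\,\cdot\,-z)$ is real-analytic because $z\notin\overline{\Omega}$.

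Next, I would apply Proposition 3.3 to this $\Omega$. Since $z$ lies in the exterior of $\Omega$, the indicator $\chi(z)$ vanishes and one obtains
$$0 \;=\; \frac{3}{\pi^4}\int_{\partial\Omega} q_{\bf 0}(w-z)\Big(d\sigma(w)\,f(w)\Big).$$
Expanding $\partial\Omega=\Gamma - n\cdot S_7(z,\varepsilon)$ and rearranging gives
$$\int_{\Gamma} q_{\bf 0}(w-z)\Big(d\sigma(w)\,f(w)\Big) \;=\; n\int_{S_7(z,\varepsilon)} q_{\bf 0}(w-z)\Big(d\sigma(w)\,f(w)\Big).$$
A second application of Proposition 3.3, this time on the small ball $B_8(z,\varepsilon)$ (where $\chi(z)=1$), shows that the right-hand integral equals $\tfrac{\pi^4}{3}f(z)$, and multiplying by $\tfrac{3}{\pi^4}$ yields the claimed identity. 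The right $\mathbb{O}$-regular variant is handled in the same way by switching the bracketing consistently.

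The main obstacle is the homological step: one must actually realize the algebraic difference $\Gamma-n\cdot S_7(z,\varepsilon)$ as the boundary of a \emph{strongly Lipschitz} 8-chain in $U\setminus\{z\}$, so that the regularity hypotheses of Proposition 3.3 are met. This is precisely what the parametrization (\ref{param}) and the strongly Lipschitz assumption on $\Gamma$ are tailored to deliver. A secondary subtle point, genuinely octonionic and absent in the Clifford case, is that one may not re-bracket the integrand: the grouping $q_{\bf 0}(w-z)\bigl(d\sigma(w)f(w)\bigr)$ must be preserved verbatim on $\partial\Omega$, since it is this specific form (and not the alternative one, which by the remark after Proposition 3.3 would introduce a non-vanishing associator volume contribution) for which the Cauchy integral theorem produces a zero right-hand side. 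With these precautions the argument becomes the familiar linking-number style reduction from classical complex and Clifford analysis.
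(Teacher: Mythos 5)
Your argument is correct and is essentially the proof the paper has in mind: Theorem~\ref{topcauchy} is stated in the paper without a formal proof, resting on the homological discussion ($H_8(\partial G,\partial G-z)\cong\tilde H_7(S_7)\cong\mathbb{Z}$) immediately preceding it, and your reduction of $\Gamma$ to $n=w_\Gamma(z)$ copies of a small sphere via a bounding $8$-chain, followed by two applications of Proposition 3.3 with the bracketing $q_{\bf 0}(w-z)\bigl(d\sigma(w)f(w)\bigr)$ kept fixed, is exactly the standard way to make that sketch precise. The one step I would tighten is ``winding number zero around $z$, hence trivial in $\tilde H_7(U\setminus\{z\})$'': vanishing of the Kronecker index only gives triviality in $H_7(\mathbb{R}^8\setminus\{z\})$, so to place the bounding chain $\Omega$ inside the region where $f$ is regular you should invoke the paper's standing assumption (stated just before the theorem) that $\Gamma$ is homologous, within the punctured domain, to a multiple of a small sphere $\partial B(z,\varepsilon)$; with that hypothesis the rest of your argument goes through verbatim.
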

\begin{remark}
Note that if we put the parenthesis the other way around, then we get the identity 
$$
\frac{3}{\pi^4} \int\limits_{\partial G} \Big( q_{\bf 0}(w-z) d\sigma(w)\Big) f(w) = w_{\partial G}(z) f(z) + \int\limits_G \sum\limits_{i=0}^7 \Big[q_{\bf 0}(w-z),{\cal{D}}f_i(w),e_i  \Big] dw_0 \wedge \cdots \wedge dw_7.  
$$
The volume integral is not affected in the topological version, because we simply integrate over the volume and orientation does not play any role, because the scalar differential $dV = dw_0 \wedge \cdots \wedge dw_7$ has no orientation.  
\end{remark}
\begin{remark}
Comparing with {\rm \cite{Burdik,GTBook}}, we can relate the octonionic winding number with the fourth Chern number of the $G_2$-principal bundles associated to special solutions of $G_2$ Yang-Mills gauge fields arising in generalizing 't Hoofd ansatz, see {\rm \cite{Burdik,GTBook}}. This allows us to explicitly relate the fundamental solution of the octonionic Cauchy-Riemann equation with Chern numbers of the related $G_2$-principal bundles. We will shed some more light on this interesting connection in a follow-up paper.  
\end{remark}
The topological winding number is also the key tool to define a generalized notion of multiplicity of zeroes and $a$-points of $\mathbb{O}$-regular functions. To proceed to the definition and classification of $a$-points we first need the octonionic identity theorem:
\begin{proposition}\label{identity} 
Let $G \subseteq \mathbb{O}$ be an $8$-dimensional domain. Suppose that $f,g: G \to \mathbb{O}$ are two left (right) $\mathbb{O}$-regular functions. If there exists a $7$-dimensional smooth sub-manifold $V$ where $f(z)=g(z)$ for all $z \in V$, then we have $f(z) = g(z)$ for all $z \in G$.  
\end{proposition}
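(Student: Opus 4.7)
The plan is to set $h := f - g$; by linearity of $\mathcal{D}$ the function $h$ is left (resp.\ right) $\mathbb{O}$-regular on $G$ and vanishes identically on $V$, so it suffices to prove $h \equiv 0$ on all of $G$. I would combine (a) real-analyticity of $h$ with (b) the vanishing of every partial derivative of $h$ at some $p \in V$; since $G$ is connected, these together force $h \equiv 0$.

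For (a), a direct expansion shows $\overline{\mathcal{D}}\mathcal{D}h = \Delta h$ componentwise in the octonionic setting. Indeed, the cross-terms $\overline{e_l}(e_j\,\partial_l\partial_j h)$ with distinct positive indices $l\neq j$ cancel pairwise thanks to the alternativity identity $e_l(e_j a) = -e_j(e_l a)$ (a consequence of the associator being alternating in any alternative algebra) together with the commutativity of mixed partials; the cross-terms with $l=0$ or $j=0$ cancel directly by commutativity of partials; and the diagonal terms give $\sum_l \partial_l^2 h = \Delta h$. Hence every real component of $h$ is harmonic, so $h$ is real-analytic on $G$.

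For (b), note that $\mathcal{D}$ is elliptic because its principal symbol $\sigma_\xi := \sum_{j=0}^7 \xi_j e_j$ has octonionic modulus $|\xi|\neq 0$ for $\xi\neq 0$, so every smooth hypersurface is non-characteristic. Fix $p\in V$ and choose local coordinates $(y_1,\ldots,y_7,t)$ near $p$ in which $V = \{t=0\}$; the equation $\mathcal{D}h = 0$ then rewrites as $A(y,t)\,\partial_t h = -\sum_{i=1}^7 B_i(y,t)\,\partial_{y_i}h$, where $A(y,t)$ is left multiplication by an invertible octonion determined by the normal direction. Because $h|_{t=0}\equiv 0$ forces all tangential derivatives of $h$ to vanish on $V$, the alternativity identity $A^{-1}(A a) = a$ permits solving for $\partial_t h$ and yields $\partial_t h|_{t=0}\equiv 0$ near $p$. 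Differentiating in $t$ repeatedly and reapplying the same inversion gives inductively $\partial_t^k h|_{t=0}\equiv 0$ near $p$ for every $k\geq 0$, so every partial derivative of $h$ at $p$ vanishes; real-analyticity and connectedness of $G$ then force $h\equiv 0$ on $G$. The principal difficulty is keeping control of non-associativity in the induction, but each step involves only left multiplication by the single octonion $A(y,t)$, whose generated subalgebra is associative by Artin's theorem, so the inversions are unambiguous; the right-regular case is entirely analogous with inversions performed on the right.
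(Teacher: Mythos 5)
Your proof is correct, but it follows a genuinely different route from the paper. The paper adapts Fueter's classical argument: at each point of $V$ it takes seven linearly independent tangential directions, combines the seven vanishing directional derivatives with the Cauchy--Riemann equation into an $8\times 8$ system, uses a formal octonionic determinant with nonvanishing real part to force all first partials of $f-g$ to vanish on $V$, iterates this (partial derivatives of $\mathbb{O}$-regular functions are again $\mathbb{O}$-regular by the Cauchy integral formula) to kill all higher derivatives, and concludes via the uniqueness of the octonionic Taylor expansion. You instead (a) factor the Laplacian, $\overline{\mathcal{D}}\mathcal{D}h=\Delta h$, using the linearized alternative law to cancel the mixed terms, so that $h=f-g$ is componentwise harmonic and hence real-analytic, and (b) run a non-characteristic (Cauchy--Kovalevskaya/Holmgren-style) induction in adapted coordinates $V=\{t=0\}$, solving $A\,\partial_t h=-\sum_i B_i\,\partial_{y_i}h$ for $\partial_t h$ via $A^{-1}(Aa)=a$ (legitimate by Artin's theorem, since $A^{-1}$ lies in the subalgebra generated by $A$), to annihilate the full jet of $h$ at a point of $V$, finishing with the identity theorem for real-analytic functions on the connected set $G$. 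Your route buys independence from the octonionic Taylor-series machinery and from the somewhat delicate formal-determinant step, replacing them with standard elliptic/alternative-algebra facts; the paper's route stays entirely inside established octonionic function theory and needs no adapted coordinates. Two small points you should make explicit: the pointwise computation $\overline{\mathcal{D}}\mathcal{D}h=\Delta h$ presupposes $h\in C^2$, which (as in the paper) follows from the Cauchy integral formula, since the definition of $\mathbb{O}$-regularity only assumes real differentiability; and since the chart $(y,t)$ is merely smooth while real-analyticity is asserted in the standard coordinates, add the one-line chain-rule remark that vanishing of all $(y,t)$-derivatives at $p$ implies vanishing of all $x$-derivatives at $p$ before invoking analytic continuation.
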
	
In particular, a left (right) $\mathbb{O}$-regular function satisfying $f(z)=0$ on a $7$-dimensional sub-manifold vanishes identically. Similarly, if there is an octonion $a \in \mathbb{O}$ such that $f(z)=a$ for all $z \in V$, then $f(z) = a$ for all $z \in G$. Although the proof only uses basic tools of octonionic analysis, we prefer to present it in detail, as we are not aware of a direct reference in the literature addressing the particular octonionic setting. For the proof of the statement in the associative Clifford analysis setting we refer to \cite{ghs}, p. 187.  	
\begin{proof}
The proof can be done by extending R. Fueter's argumentation from the quaternionic case presented in \cite{Fueter1948-49} on pp.185-189. Without loss of generality we consider the situation where $g(z)$ is the zero function. Suppose now that $V$ is a seven dimensional smooth manifold where $f|_V = 0$. Consider an arbitrary point $c \in V$ with $f(c)=0$. Since $V$ is $7$-dimensional and smooth one can find seven $\mathbb{R}$-linearly independent unit octonions, say ${\bf n}_1, \ldots, {\bf n}_7$ with $|{\bf n}_h|=1$ $(h=1,\ldots,7)$ that lie in the $7$-dimensional tangent space $T_V(c)$. Next define  $\xi^{(h)}_0 := \langle {\bf n}_h,1\rangle$ and $\xi^{(h)}_j :=\langle {\bf n}_h,e_j\rangle$ for $j=1,\ldots,7$ where $\langle\cdot,\cdot\rangle$ is the scalar product on $\mathbb{O}$ defined in Section~2.  
Notice that all the values $\xi^{(h)}_j$ are real for all $j=0,1\ldots,7$ and all $h=1,\ldots,7$. Next consider for each point $c \in V$ the real $7\times8$-matrix composed by the seven rows constituted by the eight real coordinates of the seven octonions ${\bf n}_1,\ldots,{\bf n}_7$, respectively, i.e.
$$
A:=\left(\begin{array}{cccc} \xi^{(1)}_0 & \xi^{(1)}_1 & \cdots & \xi^{(1)}_7\\   
\xi^{(2)}_0 & \xi^{(2)}_1 & \cdots & \xi^{(2)}_7\\
\vdots & \vdots & \cdots & \vdots \\
\xi^{(7)}_0 & \xi^{(7)}_1 & \cdots & \xi^{(7)}_7\\
\end{array}  \right)
$$  
Re-interpreting the seven octonions ${\bf n}_j$ as column vectors from $\mathbb{R}^8$, we have $rank({\bf n}_1,\ldots,{\bf n}_7)=7$ in view of the $\mathbb{R}$-linear independency. Consequently, also the rank of the largest non-vanishing sub-determinant must equal $7$. Without loss of generality we may suppose that 
\begin{equation}\label{domega}
\det\left(\begin{array}{cccc} \xi^{(1)}_1 & \xi^{(1)}_2 & \cdots & \xi^{(1)}_7\\   
\xi^{(2)}_1 & \xi^{(2)}_2 & \cdots & \xi^{(2)}_7\\
\vdots & \vdots & \cdots & \vdots \\
\xi^{(7)}_1 & \xi^{(7)}_2 & \cdots & \xi^{(7)}_7\\
\end{array}  \right) \neq 0.
\end{equation}
Otherwise, we change the labels of the components. 

Next we use that $f(z) = f_0(z) + \sum\limits_{k=0}^7  f_k(z) e_k \equiv 0$ on $V$. Therefore, the directional derivatives also vanish all, i.e. $\frac{\partial f}{\partial {\bf n}_h} = 0$ for each $h=1,2,\ldots,7$. Using the ordinary chain rule gives seven equations:
$$
\frac{\partial f}{\partial {\bf n}_h} = \sum\limits_{k=0}^7 \frac{\partial f}{\partial x_k} \frac{\partial x_k}{\partial {\bf n}_h} = \sum\limits_{k=0}^7 \frac{\partial f}{\partial x_k} \xi^{(h)}_k=0,\quad h=1,\ldots,7.
$$
Additionally, as eighth condition, $f$ has to satisfy the octonionic left Cauchy-Riemann equation $\sum\limits_{k=0}^7 e_k \frac{\partial f}{\partial x_k} = 0$. 

Consider the formal octonionc determinant 
$$
\det(\Omega) := \det\left(\begin{array}{cccc} 1 & e_1 & \cdots & e_7\\   
\xi^{(1)}_0 & \xi^{(1)}_1 & \cdots & \xi^{(1)}_7\\
\vdots & \vdots & \cdots & \vdots \\
\xi^{(7)}_0 & \xi^{(7)}_1 & \cdots & \xi^{(7)}_7\\
\end{array}  \right),
$$
defined formally in the usual way. Note that the non-associativity does not lead to  ambiguous interpretations, because only the entities $e_1,\ldots,e_7$ are octonions, while the other entries $\xi^{(h)}_k$ are all real-valued expressions. So, this formal determinant is a well-defined octonion. 
The eight equations mentioned above could be satisfied under two particular circumstances only. Firstly, they could be satisfied if $\det(\Omega)$ vanished. However, this is impossible. Notice that $\det(\Omega)$ represents an octonion. An octonion only vanishes if {\em all} its real components vanish. However, we obviously have $\Re\{\det(\Omega))\}\neq 0$ in view of (\ref{domega}). The only remaining second option is that 
$$
\frac{\partial f}{\partial x_k} = 0,\quad k=0,1,\ldots,7
$$ 
at each $z \in V$. Note that also the octonionic Cauchy integral formula implies that the left $\mathbb{O}$-regularity of $f$ is also inherited by all partial derivatives of $f$. Consequently, the same argumentation is also true for all partial derivatives $\frac{\partial^{n_1+\cdots+n_7}}{\partial x_1^{n_1} \cdots \partial x_7^{n_7}} f(z) = 0$. Following \cite{Imaeda,XL2001} we can expand $f$ into a Taylor series  around each left $\mathbb{O}$-regular point $z=c \in V$ 
of the form $f(z) = \sum\limits_{n=0}^{\infty} \sum\limits_{n=n_1+\cdots+n_7} V_{\bf n}(z-c) c_{n_1,\ldots,n_7} \equiv 0$ where 
$$
V_{\bf n}(z) = \frac{1}{|{\bf n}|!} \sum\limits_{\pi \in perm({\bf n})} (Z_{\pi(n_1)}(Z_{\pi(n_2)}( \cdots (Z_{\pi(n_{6})} Z_{\pi(n_{7})})\cdots))).
$$
One has to apply the parenthesis in this particular way. Due to the lack of associativity, the parenthesis cannot be neglected. 
Here, $perm({\bf n})$ denotes the set of all distinguishable permutations of the sequence $(n_1,n_2,\ldots,n_7)$ and $Z_i := V_{\tau(i)}(z) := x_i - x_0 e_i$ for all $i=1,\ldots,7$, cf. \cite{XL2001} Theorem C p.208. Here $\tau(i)$ is the multi-index $(n_1,\ldots,n_7)$ where $n_j = 0$ for all $j \neq i$ and $n_i=1$.

However, following also from \cite{XL2001}, $c_{n_1,\ldots,n_7} :=\Bigg( \frac{\partial^{n_1+\cdots+n_7}}{\partial x_1^{n_1} \cdots \partial x_7^{n_7}} f(z)\Bigg)_{z=c} = 0$. The uniqueness of the Taylor series representation implies that $f$ must be identically zero over the whole domain $G$.  
\end{proof}

\begin{remark}
If one considers instead of $\mathbb{O}$-regular functions, the set of slice-regular functions from {\rm \cite{GP,JRS}}, then one even gets a much stronger version of the identity theorem, namely stating that two slice-regular functions already coincide with each other, when they coincide with each other on a one-dimensional set with an accumulation point. This has a strong consequence on the structure of the zeroes.    
\end{remark}

Since also the octonions form a normed algebra, we can introduce the notion of an isolated $a$-point of an $\mathbb{O}$-regular function as follows, compare with \cite{Hempfling,Kra2004}:
\begin{definition}
Let $U \subseteq \mathbb{O}$ be an open set and $f:U \to \mathbb{O}$ be a function. Then we say that $f$ has an isolated $a$-point at $c \in U$, if $f(c)=a$ and if there exists a positive real $\varepsilon > 0$, such that $f(z) \neq a$ for all $z \in B(c,\varepsilon) \backslash\{c\}$. If $a=0$, then we call $c$ an isolated zero.   
\end{definition}
Let $U \subseteq \mathbb{O}$ be an open set, $c \in U$ and $f:U \to \mathbb{O}$ be a real differentiable function, i.e. we suppose that each real component function $$f_i:U \to \mathbb{R}\;\;(i=0,1,\ldots,7)\;\; {\rm of}\;\; f(z)= f_0(z) + f_1(z) e_1 + \cdots + f_7(z)e_7$$ is partial differentiable.   
According to the implicit function theorem in $\mathbb{R}^8$ a sufficient criterion for an $a$-point of a real-differentiable function $f:U \to \mathbb{O}$  of being an isolated $a$-point with $f(c)=a$ is that the Jacobian determinant does not vanish $\det(Jf)(c) := \det\Big(\frac{\partial f_j}{\partial x_j} \Big)_{0 \le i,j \le 7} \neq 0$. However, this clearly is just a sufficient criterion, as the following example illustrates. 
Take for instance the function $:\mathbb{O} \to \mathbb{O}$ defined by 
\begin{eqnarray*}
f(z)&:=& V_{2,0,0,\ldots,0}(z) + V_{0,2,0,\ldots,0}(z)+\cdots+V_{0,\ldots,0,2}(z)\\ &=& Z_1^2+Z_2^2+\cdots+Z_7^2  = (x_1^2+\cdots+x_7^2-7x_0^2) - 2 \sum\limits_{i=1}^7 x_0x_i e_i
\end{eqnarray*}
which is clearly left and right $\mathbb{O}$-regular in the whole algebra $\mathbb{O}$. 
Obviously, one has $f(0)=0$. In general, $f(z)=0$ implies that first 
$$x_1^2+x_2^2+\cdots+x_7^2=7x_0^2$$
and one has $x_0 x_i = 0$ for each $i=1,\ldots,7$. The first relation implies that $x_0 = \pm \frac{1}{\sqrt{7}} \sqrt{x_1^2+\cdots+x_7^2}$. Inserting this expression into the other relations yields $\sqrt{x_1^2+\cdots+x_7^2} x_i = 0$ for all $i=1,\ldots,7$. Since $x_1^2+\cdots+x_7^2 > 0$ whenever $(x_1,x_2,\ldots,x_7) \neq (0,0,\ldots,0)$ we must have $x_i = 0$ for all $i=1,\ldots,7$. Therefore, also $x_0=0$. Summarizing $z=0$ is the only zero of $f$ and therefore it must be an isolated zero. The Jacobian matrix however is:
$$
(Jf)(z):= \left(\begin{array}{cccccc} -14x_0 &  2x_1 & 2x_2    & \cdots & 2x_6    & 2x_7\\
									  -2x_1 & -2x_0 & 0    & \cdots & 0    & 0 \\
									  -2x_2     &   0   &-2x_0 & \cdots &0     & 0\\
									  \vdots&\vdots &\vdots&\vdots  &\vdots& \vdots\\
									  -2x_6     &   0   & 0    &   \cdots & -2x_0 & 0 \\
									  -2x_7 &   0   & 0    &  \cdots & 0 & -2x_0 
									  \end{array} \right).
$$
Inserting $z=0$ yields $\det(Jf)(z)=0$. 

A typical example of a non-linear left $\mathbb{O}$-regular function with one single octonionic isolated zero $z^*$ satisfying $Jf(z^*) \neq 0$ can be constructed by applying T. Hempfling's construction from \cite{Hempfling} p.111.  
Adapting from \cite{Hempfling}, the octonionic version of the function 
$$
f(z)= (x_1 x_2 \cdots x_7-1)-(x_0 x_2 \cdots x_7-1)e_1 - \cdots -(x_0 x_1 \cdots x_6-1)e_7   
$$  
actually is left $\mathbb{O}$-regular. We have 
$$
\frac{\partial f}{\partial x_0} = -\sum\limits_{j=1}^7\Big(\prod\limits_{i\neq 0, i \neq j} x_i \Big) e_j  
$$ 
and for $k \in \{1,\ldots,7\}$ 
$$
e_k \frac{\partial f}{\partial x_k} = \Big(\prod\limits_{i\neq 0, i \neq k}  x_i \Big) e_k -  \Big(\prod\limits_{i\neq 0, i \neq k}  x_i \Big) e_k e_1 - \cdots -  \Big(\prod\limits_{i\neq 0, i \neq k}  x_i \Big) e_k e_7. 
$$
So, $f$ actually satisfies $\frac{\partial f}{\partial x_0} + \sum\limits_{k=1}^7 e_k \frac{\partial f}{\partial x_k} = 0$. 

As one readily observes, one has $f(z^*)=0$ when inserting $z^{*}=1+e_1+\cdots+e_7$. Furthermore, $\frac{\partial f_i}{\partial x_j} = \delta_{ij} \prod\limits_{k=0,k\neq i,j}^7 x_k$, where $\delta_{ij}$ denotes the ordinary Kronecker symbol. Thus,
$$
Jf((1+e_1+\cdots+e_7)) = \left(  \begin{array}{ccccc} 0 & 1 & 1 & \cdots & 1\\
											   1 & 0 & 1 & \cdots & 1 \\
											   1 & 1 & 0 & \cdots & 1\\
											   \vdots & \vdots & \vdots & \vdots & \vdots\\
											   1 & 1 & 1 & \cdots & 0 \end{array}
\right),
$$
and therefore $\det(Jf((1+e_1+\cdots+e_7))) = -7 \neq 0$. $z^*$ clearly is an isolated zero of $f$. 

Note that in general a left or right $\mathbb{O}$-regular function can possess also zeroes that lie on $k$-dimensional manifolds with $k \le 6$. The case $k=7$ cannot appear as direct a consequence of Proposition~\ref{identity}, because if a left $\mathbb{O}$-regular function vanishes on a $7$-dimensional manifold, then it must be identically zero over the whole $8$-dimensional space.  Furthermore, note that the zero sets of left or right $\mathbb{O}$-regular functions must be real analytic manifolds. Already very simple octonionic functions can have connected sets of zeroes. Adapting from \cite{Zoell} and \cite{Hempfling}, in the octonionic case the simplest examples (for each dimension) are 
\begin{eqnarray*}
f(z)=Z_1^2+\cdots+Z_7^2 & & {\rm isolated\;zero\;at}\;z^*=0\\
f(z)=Z_1^2+\cdots+Z_6^2 & & {\rm 1-dimensional\;zero\;set \;at}\;z \in e_7 \mathbb{R}\\
f(z)=Z_1^2+\cdots+Z_5^2 & & {\rm 2-dimensional\;zero\;set \;at}\;z \in e_6 \mathbb{R} \oplus e_7 \mathbb{R}\\
f(z)=Z_1^2+\cdots+Z_4^2 & & {\rm 3-dimensional\;zero\;set \;at}\;z \in e_5 \mathbb{R} \oplus e_6 \mathbb{R} \oplus e_7 \mathbb{R}\\
f(z)=Z_1^2+Z_2^2+Z_3^2 & & {\rm 4-dimensional\;zero\;set \;at}\;z \in e_4 \mathbb{R} \oplus \cdots \oplus e_7 \mathbb{R}\\
f(z)=Z_1^2+Z_2^2 & & {\rm 5-dimensional\;zero\;set \;at}\;z \in e_3 \mathbb{R} \oplus \cdots \oplus e_7 \mathbb{R}\\
f(z)=Z_1^2 & & {\rm 6-dimensional\;zero\;set \;at}\;z \in e_2 \mathbb{R} \oplus \cdots \oplus e_7 \mathbb{R}\\
\end{eqnarray*}
 where $Z_i$ are again the octonionic Fueter polynomials $Z_i = x_i-x_0e_i$ for $i=1,\ldots,7$. 
 
 Generalizing the construction from \cite{Hempfling} a further class of interesting examples can be gained from the following construction. Let $k \in \{2,\ldots,6\}$ be an integer and consider the function $f:\mathbb{O} \to \mathbb{O}$, $f(z):= Z_1^2+\cdots+Z_k^2-\sum\limits_{j=k+1}^7 Z_je_j$ composed by the octonionic Fueter polynomials. Again, this function is both left and right $\mathbb{O}$-regular and can be written in the form 
 $$
 f(z)=\Big(\sum\limits_{i=1}^k x_i^2\Big) - k x_0^2+(7-k)x_0 - 2 x_0\sum\limits_{i=1}^k x_i e_i + \sum\limits_{i=k+1}^7 x_i e_i.
 $$
  when switching to the ordinary variables $x_i$.
 Now consider the function $g(z):=f(z)-R^2$, where $R>0$ is a real number. Then $g(z)=0$ if and only if the following system of equations is satisfied
 \begin{eqnarray*}
 \sum\limits_{i=1}^k x_i^2-k x_0^2-R^2+(7-k)x_0
 &=& 0 \\
 x_0 x_i &=& 0,\;\;i=1,\ldots,k\\
 x_i & = & 0,\;\;i=k+1,\ldots,7.
 \end{eqnarray*}
First case: $x_0=0$. Then $g(z)=0$ if and only if $\sum\limits_{i=1}^k x_i^2-R^2 = 0$. Now the zero variety of $g$ is the compact $k-1$-dimensional sphere of radius $R$ centered around the origin in the subspace generated by $e_1,e_2,\ldots,e_k$. \\
Second case $x_0 \neq 0$. Then $x_i = 0$ for all $i=1,\ldots,7$. In this case $g(z)=0$ if and only if $-kx_0^2-R^2+(7-k)x_0=0$. This condition can only be satisfied if $x_0 = - \frac{1-\frac{7}{k}}{2} \pm \sqrt{\frac{(1-\frac{7}{k})^2}{4} - \frac{R}{k}}$, provided the value in the square root expression is not negative. In this case the zero set consists at most of two isolated points $(x_0,0,\ldots,0)$ on the real axis.   
\par\medskip\par
In the spirit of \cite{Hempfling,HeKra,Kra2004} we now proceed to introduce the order of an isolated zero or isolated $a$-point  of an $\mathbb{O}$-regular function. This can be done like in the quaternionic and Clifford analysis case in terms of the topological Cauchy integral mentioned above and then represents the order of an isolated $a$-point in the sense of the topological mapping degree. 
\begin{definition}\label{isolatedorder}
	Let $U \subseteq \mathbb{O}$ be an open set, $U \neq \emptyset$. Let $f:U \to \mathbb{O}$ be left $\mathbb{O}$-regular (resp. right $\mathbb{O}$-regular) and suppose that $c \in U$ is an isolated $a$-point of $f$, i.e. $f(c)=a$ with $a \in \mathbb{O}$. Choose an $\varepsilon > 0$ such that $\overline{B}(c,\varepsilon) \subseteq U$ and suppose that $f(z) \neq 0$ for all $z \in \overline{B}(c,\varepsilon) \backslash\{c\}$. Then, 
	$$
	{\rm ord}(f-a;c) := \frac{3}{\pi^4} \int\limits_{(f-a)(\partial B(c,\varepsilon))} q_{\bf 0}(w) d\sigma(w),\quad {\rm resp.}\;{\rm ord}(f-a;c) := \frac{3}{\pi^4} \int\limits_{(f-a)(\partial B(c,\varepsilon))} d\sigma(w) q_{\bf 0}(w) 
	$$
	is called the order of the isolated $a$-point of the octonionic left (resp. right) $\mathbb{O}$-regular function $f$ at $c$.
\end{definition}
 In the case where $a=0$, we address the order of isolated zeroes of $f$, which in the left $\mathbb{O}$-regular case equals the Cauchy integral
 $$
 {\rm ord}(f;c) := \frac{3}{\pi^4} \int\limits_{f(\partial B(c,\varepsilon))} q_{\bf 0}(w) d\sigma(w).
 $$
 \begin{proposition}
 The numbers ord$(f-a;c)$ are integers and count how often the image of the sphere around the octonionic $a$-point wraps around $a$ and therefore represents the notion of the order of an $a$-point in the sense of the topological mapping degree.   	
  \end{proposition}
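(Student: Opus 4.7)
The plan is to identify the integral in Definition~\ref{isolatedorder} with the topological winding number $w_\Gamma(0)$ for $\Gamma=(f-a)(\partial B(c,\varepsilon))$, using Theorem~\ref{topcauchy} applied to the trivially $\mathbb{O}$-regular constant function $1$.

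First I would observe that because $c$ is an isolated $a$-point, we may shrink $\varepsilon>0$ so that $f(z)\neq a$ for all $z\in\overline{B}(c,\varepsilon)\setminus\{c\}$. Since $f$ is real differentiable (indeed real analytic, being $\mathbb{O}$-regular), the restriction of $f-a$ to the Lipschitz sphere $\partial B(c,\varepsilon)$ is a Lipschitz map into $\mathbb{O}\setminus\{0\}$. Its image $\Gamma:=(f-a)(\partial B(c,\varepsilon))$ is therefore a Lipschitz $7$-chain in $\mathbb{O}\setminus\{0\}$ and bounds a compact set not containing $0$ in the complement of its image.

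Next I would insert the constant function $g\equiv 1$ into Theorem~\ref{topcauchy} (which is trivially both left and right $\mathbb{O}$-regular), and take $z=0$ together with this $\Gamma$. For the left version this yields directly
$$
\frac{3}{\pi^4}\int_{\Gamma} q_{\bf 0}(w)\,d\sigma(w)=w_{\Gamma}(0),
$$
which by definition equals $\mathrm{ord}(f-a;c)$; the right case is identical modulo the order of the factors. Hence $\mathrm{ord}(f-a;c)=w_\Gamma(0)$.

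Finally I would invoke the homological identification already recalled just before Theorem~\ref{topcauchy}. The isomorphism
$$
H_8(\Gamma,\Gamma-0)\cong H_8(B_8,S_7)\cong \tilde H_7(S_7)\cong \mathbb{Z}
$$
sends the relative fundamental class of $\Gamma$ to an integer, namely the Kronecker index $w_\Gamma(0)$, and this integer is precisely the Brouwer degree of the normalised map $(f-a)/|f-a|\colon \partial B(c,\varepsilon)\to S_7$, i.e.\ the topological mapping degree counting how often the image sphere wraps around $a$. Thus $\mathrm{ord}(f-a;c)\in\mathbb{Z}$ and has the claimed geometric meaning. The only non-routine point is checking that $\Gamma$ really satisfies the hypotheses of Theorem~\ref{topcauchy} even when the Jacobian of $f$ degenerates at $c$; this is handled by noting that $f-a$ is real analytic, so $\Gamma$ is a (possibly non-embedded) Lipschitz $7$-chain missing $0$, which is all that the topological formulation requires.
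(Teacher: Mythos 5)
Your proposal is correct and follows essentially the same route as the paper: insert the constant function $1$ into the topological Cauchy formula (Theorem~\ref{topcauchy}) applied to the image cycle $\Gamma=(f-a)(\partial B(c,\varepsilon))$ at the point $0$, identify $\mathrm{ord}(f-a;c)$ with the winding number $w_\Gamma(0)$, and conclude integrality and the mapping-degree interpretation from the homological identification $H_8(\Gamma,\Gamma-0)\cong\tilde H_7(S_7)\cong\mathbb{Z}$; the paper additionally cites Alexandroff--Hopf for the fact that the image of the sphere is a $7$-cycle, which plays the role of your Lipschitz-chain remark.
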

\begin{proof}
The topological generalized version of the octonionic Cauchy integral formula (Theorem~\ref{topcauchy}) tells us that every octonionic function $h:U \to \mathbb{O}$ that is left $\mathbb{O}$ regular over an open set $U$ which contains a closed manifold $G$ whose boundary $\Gamma$ is a strongly Lipschitz $7$-chain satisfies 
$$
w_{\Gamma}(y)h(y) = \frac{3}{\pi^4} \int\limits_{\Gamma} q_{\bf 0}(w-y) \Big(d\sigma(w) h(w)\Big).
$$
So, in the case where $h(z) = 1$ for all $z \in U$, one has 
$$
w_{\Gamma}(y) = \frac{3}{\pi^4} \int\limits_{\Gamma} q_{\bf 0}(w-y) d\sigma(w).
$$  
In view of the mentioned property $H_8(\Gamma,\Gamma - c) \cong \tilde{H}_7(S_7)$ one can replace in the latter equation $\Gamma$ by the homeomorphic equivalent small sphere $\partial B(c,\varepsilon)$, so we have 
$$
w_{\Gamma}(y) = \frac{3}{\pi^4} \int\limits_{\partial B(c,\varepsilon)} q_{\bf 0}(w-y) d\sigma(w).
$$
Next we replace the octonion $y$ by $f(c)-a$ and $\partial B(c,\varepsilon)$ by $(f-a)(\partial B(c,\varepsilon))$ and one obtains 
\begin{eqnarray*}
w_{\Gamma}(y) &=& \frac{3}{\pi^4} \int\limits_{(f-a)\partial B(c,\varepsilon)} q_{\bf 0}(w-(f(c)-a)) d\sigma(w)\\
&= & \frac{3}{\pi^4} \int\limits_{(f-a)\partial B(c,\varepsilon)} q_{\bf 0}(w) d\sigma(w) \\
&=& w_{(f-a)(\partial B(c,\varepsilon))}(0).
\end{eqnarray*}
We recall that also $f(\partial B(c,\varepsilon))$ and hence also the translated expression  $(f-a)(\partial B(c,\varepsilon))$ represents a $7$-dimensional cycle, cf. \cite{AH} p. 470. 
\end{proof}
\begin{remark}\label{zero-order}
In contrast to complex analysis it can happen that one has ord$(f;c) = 0$ even if $f(c)=0$. This can occur for instance when the outward normal field of the surface of the image of the boundary cycle $\Gamma$ turns into an inward directed one after one loop of the parametrization of $f(\Gamma)$, so that in total all the contributions of the integration over the complete cycle $f(\Gamma)$  can  cancel out each other symmetrically when this happens.  
This phenomenon already occurs in the quaternionic setting, as pointed out in {\rm \cite{Fueter1948-49}, p. 199}.  
\end{remark}
\begin{remark}
As explained in {\rm \cite{Zoell}}, already in the quaternionic case there is no direct  correspondence anymore between the order of an $a$-point and the number of vanishing coefficients in the octonionic Taylor series expansion. Note that in complex analysis one has the relation 
$$
{\rm ord}(f-a;c) = n, \quad \Longleftrightarrow \quad (f-a)^{(k)}(c) = 0,\; \forall k < n,\;(f-a)^{(n)}(c) \neq 0. 
$$
Since the situation is already so complicated in the quaternions, it cannot be expected that one gets a simpler relation for the octonionic case. Actually, analogues of the counter-example presented in {\rm  \cite{Zoell}} on p.131-132 can easily be constructed.

 In the octonionic slice-regular setting described for instance in {\rm \cite{GPzeroes}}, the situation  is much simpler. As mentioned previously, in the slice function theoretical setting an octonionic slice-regular function either has isolated zeroes or spherical zeroes, similarly to the slice-monogenic setting in $\mathbb{R}^{n+1}$ cf. {\rm \cite{CSS,GPzeroes}}. In terms of the symmetric slice product the multiplicity of such a zero then can be described by the exponent of the (slice) power, namely in the usual way like in classical real and complex analysis: A slice-regular function $f$ can be decomposed uniquely in the way $f(z)=(z-a)^{*k}*g(z)$ where $g(z)$ is a uniquely defined and  zero-free slice-regular function around $a$, see {\rm \cite{CSS,GPzeroes}} and elsewhere. Note that ordinary powers of $z$ are intrinsic slice regular functions, also in the octonions. The slice-product gives some kind of symmetric structure. In the setting of $\mathbb{O}$-regular functions in the sense of the Cauchy-Riemann operator, such a decomposition is not possible, because of the lack of commutativity (and also of non-associativity).

\end{remark}

The definition of the order of an isolated $a$-point of an octonionic left or right $\mathbb{O}$-regular function in the sense of Definition~\ref{isolatedorder} is very natural from the topological point of view and so far the only meaningful tool to introduce a notion of ``multiplicity'' of an $a$-point. However, using this definition to calculate the value of the  order of a concrete practical example is very difficult in general. Note that one has to perform the integration over the {\em image} of the sphere. Now, a significant advantage of the octonionic setting in comparison to the Clifford analysis setting is that octonionic functions represent maps from $\mathbb{O} \to \mathbb{O}$ which can be uniquely identified with a map from $\mathbb{R}^8 \to \mathbb{R}^8$, by identifying the map $$x_0 + x_1 e_1 + \ldots + x_7 e_7 \mapsto f_0(z) + f_1(z) e_1 + \cdots + f_7 (z)e_7$$ with the corresponding map $\left(\begin{array}{c} x_0 \\ x_1\\ \vdots \\ x_7 \end{array}\right) \mapsto \left(\begin{array}{c} f_0(x_0,\ldots,x_7) \\ f_1(x_0,x_1,\ldots,x_7)\\ \vdots \\ f_7(x_0,x_1,\ldots,x_7) \end{array}\right)$. However, in Clifford analysis one deals with maps from $\mathbb{R}^8$ to $Cl_8 \cong \mathbb{R}^{128}$. Now, if the $7$-dimensional surface $\partial G$ is parametrized as in (\ref{param}), the image of that surface $f(\partial G)$ can be parametrized as
$$
f(\partial G) = \{f_0(x(\lambda_1,\ldots,\lambda_7)) + \sum\limits_{i=1}^7 f_i(x(\lambda_1,\ldots,\lambda_7)e_i)\}
$$
and one can simply apply the chain rule for ordinary real differentiable functions from $\mathbb{R}^8 \to \mathbb{R}^8$, as indicated in \cite{Hempfling} for purely paravector-valued functions. Applying the chain rule and exploiting the special mapping property that the image of octonionic functions are again octonions leads to the following octonionic generalization of the transformation formula from \cite{Kra2004} p. 32. In the Clifford analysis case one had to restrict onself to particular paravector-valued functions. This restriction is not necessary in the octonionic setting:
\begin{lemma}
	Let $G \subseteq \mathbb{O}$ be a domain and suppose that each real component function of an octonionic function $f:G \to \mathbb{O}$ is real differentiable in  the ordinary sense. Then we have 
	$$
	d\sigma(f(z)) = [(Jf)^{adj}(z)] \circledcirc [d\sigma(z)],
	$$
	where $[(Jf)^{adj}(z)]$ stands for the adjunct real component $8 \times 8$ matrix of the Jacobian $(Jf) = (\frac{\partial f_i}{\partial x_j})_{ij}$.Furthermore,  $[d\sigma(z)]$ represents the $\mathbb{R}^8$-vector composed by $\stackrel{\wedge}{dx_i}$ for $i=0,\ldots,7$ and $\circledcirc$ means the usual matrix-vector product, multiplying the real $8\times8$-matrix in the usual way with the $8$-dimensional real vector. The resulting $\mathbb{R}^8$-vector on the right-hand side then is re-interpreted as on octonion on the left-hand side identifying the unit vectors with the corresponding octonionic units.   
\end{lemma}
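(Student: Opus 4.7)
The plan is to prove the identity by a direct computation, combining the chain rule with the multilinearity and antisymmetry of the exterior product, and then matching the resulting coefficients against the definition of the adjugate. First, I would start from the definition of the octonionic surface element as given in the excerpt, applied to the transformed variable:
$$
d\sigma(f(z)) \;=\; \sum_{i=0}^{7} (-1)^i e_i\, \widehat{df_i},\qquad \widehat{df_i} := df_0\wedge\cdots\wedge df_{i-1}\wedge df_{i+1}\wedge\cdots\wedge df_7.
$$
Since the components $f_0,\ldots,f_7$ are real differentiable, the classical chain rule gives $df_k = \sum_{j=0}^{7}\tfrac{\partial f_k}{\partial x_j}\,dx_j$ for each $k$, and all subsequent manipulations take place in the ordinary exterior algebra over $\mathbb{R}^8$, so the non-associativity of $\mathbb{O}$ plays no role here beyond the bookkeeping of the basis vectors $e_i$ attached externally.

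Next I would expand each $\widehat{df_i}$ as a $\mathbb{R}$-linear combination of the $\widehat{dx_\ell}$. Substituting the chain rule expression into the seven-fold wedge product and invoking multilinearity, the only surviving terms correspond to sequences $(j_0,\ldots,j_{i-1},j_{i+1},\ldots,j_7)$ of pairwise distinct indices omitting precisely one index $\ell\in\{0,\ldots,7\}$. Grouping these terms using antisymmetry yields
$$
\widehat{df_i} \;=\; \sum_{\ell=0}^{7} M_{i\ell}(z)\,\widehat{dx_\ell},
$$
where $M_{i\ell}(z)$ is the $7\times 7$ minor of $(Jf)(z)$ obtained by deleting row $i$ and column $\ell$. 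This identity is essentially the Laplace expansion rule: one can verify it cleanly by wedging both sides with $dx_\ell$ and reducing to $df_0\wedge\cdots\wedge df_{i-1}\wedge dx_\ell\wedge df_{i+1}\wedge\cdots\wedge df_7 = C_{i\ell}\, dx_0\wedge\cdots\wedge dx_7$, with $C_{i\ell} = (-1)^{i+\ell} M_{i\ell}$ the $(i,\ell)$-cofactor.

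I would then combine the external sign $(-1)^i$ coming from $d\sigma(f(z))$ with the sign conventions used for $d\sigma(z) = \sum_\ell (-1)^\ell e_\ell\,\widehat{dx_\ell}$. Substituting the expansion of $\widehat{df_i}$ into the formula for $d\sigma(f(z))$ yields
$$
d\sigma(f(z)) \;=\; \sum_{i,\ell} (-1)^i M_{i\ell}\,e_i\,\widehat{dx_\ell} \;=\; \sum_{i,\ell} (-1)^{i+\ell} M_{i\ell}\,e_i\,\bigl((-1)^\ell\widehat{dx_\ell}\bigr) \;=\; \sum_{i,\ell} C_{i\ell}\, e_i\,\bigl((-1)^\ell \widehat{dx_\ell}\bigr).
$$
Reading the inner sum as the $i$-th entry of the matrix-vector product $[\mathrm{Cof}(Jf)]\circledcirc[d\sigma(z)]$ and using that the adjugate $(Jf)^{\mathrm{adj}}$ is the transpose of the cofactor matrix—and re-indexing accordingly—produces exactly the claimed identity after re-interpreting the resulting $\mathbb{R}^8$-vector as an octonion via $\mathbf{e}_i \leftrightarrow e_i$.

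The main (and really only) obstacle is the careful sign bookkeeping: one must be consistent about how the signs $(-1)^i$ and $(-1)^\ell$ that naturally attach to the surface element are distributed between the matrix entries and the vector entries, and one must confirm that the transposition implicit in the passage from cofactor matrix to adjugate is absorbed correctly when one labels the sums over $i$ (rows, identified with the octonionic units on the output side) and over $\ell$ (columns, indexing the differentials $\widehat{dx_\ell}$). Beyond this bookkeeping, no non-trivial feature of octonionic analysis enters the proof, which is why the result—unlike many octonionic statements—does not require any associator correction term.
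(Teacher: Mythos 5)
Your proposal is correct and follows essentially the same route as the paper, which likewise derives the identity by applying the chain rule to $\widehat{df_i}$, expanding the seven-fold wedge products into cofactors of $Jf$, and reassembling the double sum as the matrix--vector product (the paper notes, as you do, that no associativity is needed since all coefficients are real). Your version merely fills in the Laplace-expansion and sign-bookkeeping details that the paper leaves to the cited reference.
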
 
It should be pointed out very clearly that $\circledcirc$ does not mean the usual octonionic product. To be more explicit $[d \sigma(z)]$ is interpreted as the vector 
$$
[d\sigma(z)] :=
\left(\begin{array}{c} (-1)^0  \stackrel{\wedge}{dx_0} \\ (-1)^1\stackrel{\wedge}{dx_1}\\ \vdots \\ (-1)^7  \stackrel{\wedge}{dx_7} \end{array}\right).$$ 
The adjunct matrix $[(Jf)^{adj}(z)]$ has the form 
$$
(Jf)^{adj}(z) =\Bigg((-1)^{i+j} \det \Big( \frac{\partial f_i}{\partial x_j}(z) \Big)^{adj}  \Bigg)_{i,j}.
$$
This also provides a correction to \cite{Kra2004} p. 32 where the index $i$ of the function $f$ has been forgotten as well as the star after $(Jf)$ (indicating the adjunct) in the second line of the proof. The proof for the octonionic case can be done along the same lines as presented for the paravector-valued Clifford case in \cite{Kra2004} p. 32. The chain rule leads to 
$$
d\sigma(f(z)) = \sum\limits_{i=0}^7 \sum\limits_{j=0}^7 (-1)^{i+j} e_i \det \Big(\frac{\partial f_i}{\partial x_j}(z)\Big)^{adj}(-1)^j \stackrel{\wedge}{dx_j}
$$
and the stated formula follows, because no associativity property is required. 
\par\medskip\par
This lemma allows us to reformulate the definition of the order given in Definition~\ref{isolatedorder} in the way that the integration is performed over the simple sphere $S_7(c,\varepsilon)$. In contrast to the Clifford analysis case presented in \cite{Kra2004} p. 33 we do not need to worry about a possible restriction of the range of values. All octonion-valued functions satisfying the left or right octonionic Cauchy-Riemann system are admitted here. 
However, the way how we put the brackets in the following theorem is crucially important. In the left $\mathbb{O}$-regular case we have
\begin{theorem}\label{order-reformulated}
Let $G \subseteq \mathbb{O}$ be a domain. Let $f:G \to \mathbb{O}$ be a left $\mathbb{O}$-regular function and suppose that $c \in G$ is an isolated $a$-point of $f$ with $f(c)=a$. Choose $\varepsilon > 0$ such that $\overline{B}(c,\varepsilon) \subseteq G$ and $f(z) \neq 0$ for all $z \in \overline{B}(c,\varepsilon) \backslash\{c\}$. Then the order of the $a$-point can be re-expressed by
\begin{eqnarray*}
{\rm ord}(f-a;c) &=& \frac{3}{\pi^4} \int\limits_{S_7(c,\varepsilon)} q_{\bf 0}(f(z)-a) \cdot  \Bigg(  [(Jf)^{adj}(z)] \circledcirc [d\sigma(z)] \Bigg)\\ &=&  \frac{3}{\pi^4} \int\limits_{S_7(c,\varepsilon)} \frac{\overline{f(z)-a}}{|f(z)-a|^8}\cdot  \Bigg(  [(Jf)^{adj}(z)] \circledcirc [d\sigma(z)] \Bigg).
\end{eqnarray*}	
\end{theorem}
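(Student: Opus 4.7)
The plan is to pull the integral in Definition~\ref{isolatedorder} back from the image surface $(f-a)(\partial B(c,\varepsilon))$ to the source sphere $S_7(c,\varepsilon)$ by means of the change of variables $w = f(z)-a$. Since $a$ is a constant octonion, the Jacobian satisfies $J(f-a) = Jf$, and therefore also $(J(f-a))^{\text{adj}} = (Jf)^{\text{adj}}$. The transformation lemma proved just before the statement then yields
$$
d\sigma\bigl(f(z)-a\bigr) = [(Jf)^{\text{adj}}(z)] \circledcirc [d\sigma(z)].
$$
This is the first and central step.

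Next I would substitute $q_{\bf 0}(w) = q_{\bf 0}(f(z)-a) = \tfrac{\overline{f(z)-a}}{|f(z)-a|^{8}}$ together with the preceding identity into
$$
{\rm ord}(f-a;c) = \frac{3}{\pi^4}\int_{(f-a)(\partial B(c,\varepsilon))} q_{\bf 0}(w)\,d\sigma(w),
$$
which delivers both claimed equalities at once. Crucially, the expression $[(Jf)^{\text{adj}}(z)] \circledcirc [d\sigma(z)]$ is, by construction, a \emph{single} octonion-valued differential $7$-form (the matrix-vector product produces an $\mathbb{R}^{8}$-vector that is re-identified with an octonion via $1,e_{1},\ldots,e_{7}$), so the subsequent left multiplication by $q_{\bf 0}(f(z)-a)$ is a plain product of two octonions. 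No three-fold octonionic product ever appears, and therefore the non-associativity issue which usually forces delicate bracketing simply does not arise — this is exactly the point where the octonionic setting behaves better than one might fear, and it is why, unlike in the Clifford case of \cite{Kra2004}, no restriction to paravector-valued functions is needed.

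For the right $\mathbb{O}$-regular variant one uses the companion definition in Definition~\ref{isolatedorder} and repeats the same substitution with the two factors in reverse order; the lemma is perfectly symmetric in this respect. The isolated-$a$-point hypothesis is used only to guarantee that $f(z)-a \neq 0$ on $S_7(c,\varepsilon)$, so that $q_{\bf 0}(f(z)-a)$ is smooth there and the substituted integrand is well defined; regularity of $f$ supplies the real differentiability needed to invoke the transformation lemma. The only genuine obstacle is bookkeeping: one must check that the orientation signs $(-1)^{j}$ encoded in $[d\sigma(z)]$ are inherited correctly from the chain rule on $f(\partial B(c,\varepsilon))$, but this is precisely what the transformation lemma already certifies, so the theorem follows by direct substitution.
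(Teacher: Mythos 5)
Your proposal is correct and follows essentially the same route as the paper: the paper also obtains the theorem by substituting $w=f(z)-a$ in Definition~\ref{isolatedorder}, invoking the transformation lemma $d\sigma(f(z))=[(Jf)^{adj}(z)]\circledcirc[d\sigma(z)]$ together with the translation invariance $J(f-a)=Jf$, and noting that the bracketed term is a single octonion so no associativity issue arises. Nothing essential is missing.
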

Here, $\cdot$ stands for the octonionic product, where the term inside the large parenthesis on the right is re-interpreted as octonion.

Note that the Jacobian determinant is invariant under translations. Therefore $J(f-a)(z) = Jf(z)$. In the complex case the Jacobian simplifies to $(f-a)'(z)= f'(z)$ and one re-obtains the usual integrand $\frac{f'(z)}{f(z)-a}$ because the Cauchy kernel then coincides with the simple inverse. 
  
For the sake of completeness, in the right $\mathbb{O}$-regular case one obtains 
\begin{eqnarray*}
	{\rm ord}(f-a;c) &=& \frac{3}{\pi^4} \int\limits_{S_7(c,\varepsilon)}  \Bigg(  [(Jf)^{adj}(z)] \circledcirc [d\sigma(z)] \Bigg) \cdot q_{\bf 0}(f(z)-a) \\ &=&  \frac{3}{\pi^4} \int\limits_{S_7(c,\varepsilon)}    \Bigg(  [(Jf)^{adj}(z)] \circledcirc [d\sigma(z)] \Bigg) \cdot \frac{\overline{f(z)-a}}{|f(z)-a|^8}.
\end{eqnarray*}	
 
\par\medskip\par
Note that we always have ord$(f-a;c)=0$ in all points $c$ where $f(c) \neq a$. 
As a direct application this property and the statement of Theorem~\ref{order-reformulated} we can deduce the following argument principle for isolated $a$-points of $\mathbb{O}$-regular functions which provides an extension of Theorem 1.34 from \cite{Kra2004} where the paravector-valued Clifford holomorphic case has been treated. But also in the octonionic case we have 
\begin{theorem} (Octonionic argument principle)\\
Let $G \subseteq \mathbb{O}$ be a domain and suppose that $f:G \to \mathbb{O}$ is left $\mathbb{O}$-regular over $G$. Now, consider a nullhomologous $7$-dimensional cycle $\Gamma$ that parametrizes the surface of an $8$-dimensional oriented compact manifold $C \subset G$. Under the assumption that $f$ has only isolated $a$-points in the interior of $C$ and no further $a$-points on the boundary $\Gamma$, we have the order relation
$$
\sum\limits_{c \in C} {\rm ord}(f-a;c) = \frac{3}{\pi^4} \int\limits_{\Gamma} \frac{\overline{f(z)-a}}{|f(z)-a|^8}\cdot  \Bigg(  [(Jf)^{adj}(z)] \circledcirc [d\sigma(z)] \Bigg). 
$$
\end{theorem}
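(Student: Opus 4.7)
The strategy is to shrink the integration cycle $\Gamma$ onto small spheres around each isolated $a$-point of $f$ in $C$, and then identify the resulting small-sphere integrals with the corresponding orders via Theorem~\ref{order-reformulated}. Because $C$ is compact, every $a$-point in its interior is by hypothesis isolated, and no $a$-points lie on $\Gamma$, the set $\{c_{1},\ldots,c_{N}\}$ of $a$-points of $f$ in $C$ is finite (otherwise a subsequence would accumulate at some $c^{*}\in C$, contradicting either the isolation in the interior or the absence from the boundary). I would then choose pairwise disjoint closed balls $\overline{B(c_{k},\varepsilon_{k})}\subset\mathrm{int}(C)$, each containing only the single $a$-point $c_{k}$, and form the compact region $C':=C\setminus\bigcup_{k=1}^{N}B(c_{k},\varepsilon_{k})$, whose oriented boundary is the $7$-chain $\partial C'=\Gamma-\sum_{k=1}^{N}\partial B(c_{k},\varepsilon_{k})$.

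Next I would reinterpret the integrand as a pullback. By the transformation Lemma preceding the theorem, for every $z$ with $f(z)\neq a$,
\[
q_{\bf 0}(f(z)-a)\cdot\Bigl([(Jf)^{adj}(z)]\circledcirc[d\sigma(z)]\Bigr)\;=\;q_{\bf 0}(f(z)-a)\cdot d\sigma(f(z)),
\]
which is the $(f-a)$-pullback of the octonion-valued Cauchy-kernel $7$-form $q_{\bf 0}(w)\,d\sigma(w)$ on $\mathbb{O}\setminus\{0\}$; only one octonionic multiplication appears at each point, so non-associativity plays no role. Setting $F:=f-a$, the map $F$ sends $C'$ into $\mathbb{O}\setminus\{0\}$ by construction, so the pushforward singular $8$-chain $F_{*}C'$ lies entirely in $\mathbb{O}\setminus\{0\}$, its boundary is $F_{*}\partial C'$, and hence this $7$-cycle is nullhomologous in $\mathbb{O}\setminus\{0\}$. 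The purely topological content of Theorem~\ref{topcauchy} -- equivalently the fact that the winding integral with kernel $q_{\bf 0}$ detects the generator of $\tilde{H}_{7}(\mathbb{O}\setminus\{0\})\cong\mathbb{Z}$ -- then gives
\[
\frac{3}{\pi^{4}}\int_{\partial C'}q_{\bf 0}(f(z)-a)\cdot d\sigma(f(z))\;=\;\frac{3}{\pi^{4}}\int_{F_{*}\partial C'}q_{\bf 0}(w)\,d\sigma(w)\;=\;w_{F_{*}\partial C'}(0)\;=\;0.
\]

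Splitting $\partial C'=\Gamma-\sum_{k}\partial B(c_{k},\varepsilon_{k})$ in the left-hand integral and invoking Theorem~\ref{order-reformulated} on each small sphere finally yields
\[
\frac{3}{\pi^{4}}\int_{\Gamma}q_{\bf 0}(f(z)-a)\cdot d\sigma(f(z))\;=\;\sum_{k=1}^{N}\mathrm{ord}(f-a;c_{k})\;=\;\sum_{c\in C}\mathrm{ord}(f-a;c),
\]
where the last equality uses $\mathrm{ord}(f-a;c)=0$ whenever $c$ is not an $a$-point. The main difficulty I anticipate is making the pushforward step rigorous: $F$ need not be an immersion, so $F_{*}C'$ exists only as a singular $8$-chain rather than as an embedded submanifold with well-behaved boundary. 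This is nevertheless sufficient, because the vanishing of $\int_{F_{*}\partial C'}q_{\bf 0}(w)\,d\sigma(w)$ is a homology-level statement: it follows from $q_{\bf 0}(w)\,d\sigma(w)$ representing (up to the factor $\pi^{4}/3$) the generator of $H^{7}$ of $\mathbb{O}\setminus\{0\}$ together with $F_{*}\partial C'$ being a boundary there, without any need to control the geometry of the singular image.
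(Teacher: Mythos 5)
Your proposal is correct and follows essentially the same route as the paper: establish finiteness of the $a$-points by compactness, excise small disjoint balls around them, show the integral over the remaining boundary $\partial C'$ vanishes, and identify each small-sphere integral with ${\rm ord}(f-a;c_k)$ via Theorem~\ref{order-reformulated}. The only difference is that you make explicit (via the pullback interpretation of the integrand and the nullhomologous pushforward cycle in $\mathbb{O}\setminus\{0\}$) the step the paper merely asserts, namely that the region free of $a$-points contributes nothing; this is a welcome sharpening rather than a departure.
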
  
\begin{proof}
	The proof follows along the same lines as in the Clifford analysis case given in \cite{Kra2004} pp.33. This is a consequence of its predominant topological nature. The crucial point is that any oriented compact manifold can have atmost finitely many isolated $a$-points in its interior, let us call them  $c_1,\ldots,c_n$. Thus, one can find a sufficiently small real number $\varepsilon > 0$ such that there are no $a$-points in the union of the sets $\bigcup_{i=1}^n B(c_i,\varepsilon) \backslash\{c_i\}$. Since $f$ has neither further $a$-points nor singular points in the remaining part $C \backslash \bigcup_{i=1}^n B_i$ one obtains in view of Theorem~\ref{order-reformulated} that 
	$$
	\int\limits_{\Gamma} \frac{\overline{f(z)-a}}{|f(z)-a|^8}\cdot  \Bigg(  [(Jf)^{adj}(z)] \circledcirc [d\sigma(z)] \Bigg) = \sum\limits_{i=1}^n \int\limits_{S(c_i,\varepsilon)} \frac{\overline{f(z)-a}}{|f(z)-a|^8}\cdot  \Bigg(  [(Jf)^{adj}(z)] \circledcirc [d\sigma(z)] \Bigg).
	$$
	The assertion now follows directly, when we take into account the mentioned property that ord$(f-a;c)=0$ at all $c \in C$ with $f(c) \neq a$.   
\end{proof}
The big goal of the argument principle is that it provides us with a toplogical tool to control the isolated $a$-points or zeroes of an octonionic regular function under special circumstances. Its classical application is Rouch\'e's theorem that presents a sufficient criterion to describe by which function an octonionic regular function may be distorted in the way that it has no influence on the numbers of isolated zeroes inside a domain, when particular requirements are met. Alternatively, it gives a criterion to decide whether two octonionic monogenic functions have the same number of isolated zeroes inside such a domain. In close analogy to the associative Clifford analysis case, cf. \cite{Kra2004} Theorem 1.35, we may establish 
\begin{theorem}\label{rouche} (Generalized classical Rouch\'e's theorem)\\
Suppose that $G \subseteq \mathbb{O}$ is a domain and that $\Gamma$ is a nullhomologous $7$-dimensional cycle parametrizing the boundary of an oriented compact $8$-dimensional manifold $C \subset G$. Let $f,g:G \to \mathbb{O}$ be two  $\mathbb{O}$-regular functions that have only a finite number of zeroes inside of int $C$ and no zeroes on $\Gamma$. Provided that $|f(z)-g(z)| < |f(z)|$ for all $z \in \Gamma$, then 
$$
\sum\limits_{c \in C} {\rm ord}(f;c) = \sum\limits_{c \in C} {\rm ord}(g,c).
$$	
\end{theorem}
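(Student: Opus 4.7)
The plan is to run a classical homotopy argument, mirroring the proof in the complex and Clifford analysis cases, but exploiting the integer-valuedness provided by the topological interpretation of the boundary integral from Theorem~\ref{topcauchy}. First I would introduce the affine homotopy $h_t(z) := (1-t)f(z) + t\, g(z)$ for $t \in [0,1]$. Since ${\cal{D}}$ is $\mathbb{R}$-linear and the coefficients $1-t, t$ are real, each $h_t$ is left $\mathbb{O}$-regular on $G$. I claim that $h_t(z) \neq 0$ for every $z \in \Gamma$ and every $t \in [0,1]$: the endpoints $h_0 = f$ and $h_1 = g$ are non-vanishing on $\Gamma$ by hypothesis, while for $t \in (0,1]$ an equality $h_t(z) = 0$ would yield $f(z) = t(f(z) - g(z))$, hence $|f(z)| = t\,|f(z)-g(z)| \le |f(z)-g(z)| < |f(z)|$, a contradiction.

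Next I would consider the function
$$
I(t) := \frac{3}{\pi^4} \int\limits_{\Gamma} \frac{\overline{h_t(z)}}{|h_t(z)|^8} \cdot \Bigg( [(Jh_t)^{adj}(z)] \circledcirc [d\sigma(z)] \Bigg).
$$
The integrand depends continuously on $t$: the entries of the adjunct Jacobian and the components of $h_t$ are affine in $t$, and $(t,z) \mapsto |h_t(z)|$ is continuous and strictly positive on the compact set $[0,1] \times \Gamma$, hence uniformly bounded away from zero. Consequently $I:[0,1] \to \mathbb{R}$ is continuous.

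The decisive step is to recognize that $I(t)$ is in fact $\mathbb{Z}$-valued. Applying the transformation lemma preceding Theorem~\ref{order-reformulated} in the reverse direction via the substitution $w = h_t(z)$, one rewrites $I(t)$ as $\frac{3}{\pi^4} \int_{h_t(\Gamma)} q_{\bf 0}(w)\, d\sigma(w)$. Invoking Theorem~\ref{topcauchy} with the constant function $1$ identifies this with the Kronecker index $w_{h_t(\Gamma)}(0)$, i.e.\ the winding number of the $7$-cycle $h_t(\Gamma)$ around the origin; this is an integer through the homological identification $H_8(h_t(\Gamma), h_t(\Gamma) - 0) \cong \tilde{H}_7(S_7) \cong \mathbb{Z}$ emphasized earlier. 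A continuous integer-valued function on the connected interval $[0,1]$ is constant, so $I(0) = I(1)$.

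Finally, the hypotheses that $f$ and $g$ each have only finitely many isolated zeroes in the interior of $C$ and no zeroes on $\Gamma$ allow the octonionic argument principle to be invoked directly at the endpoints $t = 0$ and $t = 1$, giving $I(0) = \sum_{c \in C} {\rm ord}(f;c)$ and $I(1) = \sum_{c \in C} {\rm ord}(g;c)$, which yields the claim. The main subtlety of the argument is that for intermediate $t$ one has no a priori control over the zero locus of $h_t$ inside $C$ (it could in principle be positive-dimensional), so the argument principle itself is \emph{not} available there; the winding-number interpretation of $I(t)$ via Theorem~\ref{topcauchy} is precisely the tool that bridges this gap without requiring any structural hypothesis on the intermediate homotopies.
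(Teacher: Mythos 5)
Your proposal is correct and follows the same overall strategy as the paper: the affine homotopy $h_t = f + t(g-f)$, preservation of left $\mathbb{O}$-regularity because $t$ is a \emph{real} scalar, the non-vanishing of $h_t$ on $\Gamma$ via the triangle-type estimate, and the continuity-plus-integrality argument forcing the boundary integral to be constant in $t$. There is, however, one point where you genuinely diverge, and it is to your credit. The paper defines $N(t) := \sum_{c\in C}\mathrm{ord}(h_t;c)$ for every $t\in[0,1]$ and identifies it with the boundary integral via the argument principle; this tacitly presupposes that each intermediate $h_t$ also has only finitely many isolated zeroes inside $C$, which is not part of the hypotheses (they constrain only $f$ and $g$). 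You instead take the boundary integral $I(t)$ itself as the primary object and obtain its integrality directly from the Kronecker-index interpretation of $\frac{3}{\pi^4}\int_{h_t(\Gamma)} q_{\bf 0}(w)\,d\sigma(w)$ as the winding number of the image cycle $h_t(\Gamma)$ about the origin (Theorem~\ref{topcauchy} with $f\equiv 1$ plus the transformation lemma), invoking the argument principle only at the endpoints $t=0,1$ where the finiteness hypotheses actually apply. This sidesteps any structural assumption on the zero locus of the intermediate functions and is the more robust formulation of the same homotopy argument; the paper's version would need either your observation or an additional remark that the constancy of the boundary integral is what is really being proved. One small caveat: the step rewriting $I(t)$ as an integral over $h_t(\Gamma)$ requires that $h_t(\Gamma)$ is again a $7$-cycle, which the paper justifies by citing Alexandroff--Hopf in the proof of the proposition following Definition~\ref{isolatedorder}; it would be worth citing that explicitly in your write-up as well.
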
   
Also the nature of this theorem is predominantly topological. The topological aspects play a much more profound role than the function theoretical aspects, which nevertheless are also needed because the proof uses the argument principle involving the particular Cauchy-kernel of the octonionic Cauchy-Riemann system. Let us define a family of left $\mathbb{O}$-regular functions depending on a {\em continuous} real parameter $t \in [0,1]$ by 
$$
h_{t}(z) := f(z)+ t(g(z)-f(z)), \quad z \in G.
$$
For each $t \in [0,1]$ each function $h_z$ is left $\mathbb{O}$-regular over $G$, since $t$ is only a {\em real} parameter. Note that otherwise, the left $\mathbb{O}$-regularity would be destroyed in general. Let $z \in \Gamma$. Then we have $|t(g(z)-f(z)|=|t||f(z)-g(z)| \le |f(z)-g(z)| < |f(z)|$, where the latter inequality follows from the assumption. Therefore $h_t(z) \neq 0$ for all $z \in \Gamma$. 

 Furthermore, for each $t \in [0,1]$ the entity ord$(h_t;c)$ is an integer. Since the number of zeroes is supposed to be finite in $G$, for each $t$ the sum $\sum\limits_{c \in C} {\rm ord}(h_t;c)$ is finite and represents a finite integer $N(t) \in \mathbb{Z}$. Per definition we have 
\begin{eqnarray*}
	N(t) &=& \sum\limits_{c \in C} {\rm ord}(h_t;c)\\
	     & =& \frac{3}{\pi^4} \int\limits_{\Gamma} q_{\bf 0}(h_t(z)) \cdot \Bigg(
	     [(Jh_t)^{adj}(z)] \circledcirc [d\sigma(z)] 
	     \Bigg)\\
	     &=& \frac{3}{\pi^4} \int\limits_{\Gamma} q_{\bf 0}(f(z)+t g(z)-t f(z)) \cdot \Bigg(
	     [(J (f+tg-tf))^{adj}(z)] \circledcirc [d\sigma(z)] 
	     \Bigg).
\end{eqnarray*}
Since all terms under the latter integral are continuous functions in the variable $t$, also the expression $N(t)$ on the left-hand side must be  continuous in the variable $t$. However, $N(t)$ is an integer-valued expression for any $t \in [0,1]$.Therefore, $N(t)$ must be a constant expression, hence $N(0)= \sum\limits_{c \in C} {\rm ord}(h_0;c) = \sum\limits_{c \in C} {\rm ord}(f;c) $ and $N(1) = \sum\limits_{c \in C} {\rm ord}(h_1;c) = \sum\limits_{c \in C} {\rm ord}(g;c)$ must be equal. 
\par\medskip\par
As a nice application of Theorem~\ref{rouche} we can establish the following weakened version of Hurwitz' theorem. The following statement can also be carried over to the quaternionic monogenic setting and to the context of paravector-valued monogenic functions in Clifford algebras, for which this statement has not been established so far, at least as far as we know. We prove 
\begin{theorem}(Generalized Hurwitz theorem)\\
	Let $G \subset \mathbb{O}$ be a domain. Suppose that $f_n: G \to \mathbb{O}$ is a normally convergent sequence of $\mathbb{O}$-regular functions with $f_n(z) \neq 0$ at all $z \in G$ and for  each $n \in \mathbb{N}$. Then the limit function $f(z):= \lim\limits_{n \to \infty} f_n(z)$ has the property that either $\sum\limits_{c \in G} {\rm ord}(f;c)=0$ for all $c \in G$ or $f$ vanishes identically over $G$. 
 \end{theorem}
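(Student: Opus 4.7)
My plan would be to deduce this directly from the generalized Rouch\'e theorem (Theorem~\ref{rouche}) together with the identity theorem, in close analogy with the classical complex-analytic proof. The overall scheme is: first establish that the limit $f$ itself lies in the class of left $\mathbb{O}$-regular functions; then dichotomize on whether $f\equiv 0$; and in the non-trivial case, apply Rouch\'e on small spheres (or more generally on boundary cycles of oriented compact $8$-manifolds inside $G$) to transport zero-freeness from the $f_n$ to the sum of orders of $f$.

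Step one would be to verify that the limit is left $\mathbb{O}$-regular. Since $f_n \to f$ normally, the convergence is uniform on every compact set; in particular, on the boundary $7$-chain $\Gamma$ of any compact $8$-manifold $C\Subset G$. Writing each $f_n$ via the Cauchy integral formula (the proposition preceding Theorem~\ref{topcauchy}) and passing to the limit under the integral sign yields the same representation for $f$, hence $f$ is left $\mathbb{O}$-regular on $G$.

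Step two is the Rouch\'e argument. Assume $f$ does not vanish identically. Let $C\subset G$ be any oriented compact $8$-manifold with boundary cycle $\Gamma$, chosen so that $f$ has no zeroes on $\Gamma$ and only finitely many isolated zeroes in the interior of $C$; by Proposition~\ref{identity}, $f$ cannot vanish on a $7$-dimensional submanifold, so such $\Gamma$'s exist (e.g.\ spheres of generic radii around any point of interest). By compactness of $\Gamma$ there is $m>0$ with $|f(z)|\ge m$ on $\Gamma$, and normal convergence supplies an index $N$ such that $|f_n(z)-f(z)|<m\le |f(z)|$ for all $z\in\Gamma$ and all $n\ge N$. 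Theorem~\ref{rouche} applied to the pair $(f,f_n)$ then gives
\begin{equation*}
\sum_{c\in C}{\rm ord}(f;c)\;=\;\sum_{c\in C}{\rm ord}(f_n;c).
\end{equation*}
Since by hypothesis $f_n$ is zero-free on all of $G$, every term on the right is zero, and the claimed vanishing follows. In particular, applied to a small ball around any isolated zero $c$ of $f$, this yields ${\rm ord}(f;c)=0$, which is exactly the pointwise reading of the stated conclusion.

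The main obstacle, and the reason the statement is weaker than its classical complex counterpart, is two-fold. First, the Rouch\'e theorem available here (Theorem~\ref{rouche}) requires $f$ to have only \emph{isolated} zeroes in the interior of $C$; however, as the examples in the paper show, $\mathbb{O}$-regular functions may have zero sets of dimension up to $6$. For those regions $C$ meeting a non-isolated zero variety, the present form of Rouch\'e does not apply, and one would need the tubular-neighborhood winding-number framework announced in the introduction. Second, even when all zeroes are isolated, ${\rm ord}(f;c)=0$ no longer forces $f(c)\ne 0$ (cf.\ Remark~\ref{zero-order}); this is precisely why the conclusion is phrased in terms of the vanishing of the sum of orders rather than in terms of the absence of zeroes, and is the source of the weakening relative to the classical Hurwitz theorem.
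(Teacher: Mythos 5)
Your proposal is correct and follows essentially the same route as the paper: establish that the limit $f$ is left $\mathbb{O}$-regular (the paper cites the Weierstra{\ss} convergence theorem of Xing-min Li and Peng where you rederive it from the Cauchy integral formula), then, assuming $f\not\equiv 0$, use the identity theorem to find spheres on which $\min|f|=m>0$, invoke normal convergence to get $|f_n-f|<m\le|f|$ there, and apply Theorem~\ref{rouche} to the pair $(f,f_n)$ to conclude that the sum of orders vanishes. Your closing observations about the isolated-zero hypothesis in Rouch\'e and about ${\rm ord}(f;c)=0$ not excluding $f(c)=0$ accurately reflect the caveats the paper itself records in Remark~\ref{zero-order} and the subsequent remark.
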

  
\begin{proof}
According to \cite{XL2002} Theorem 11, left (or right) $\mathbb{O}$-regular functions satisfy Weierstra{\ss}' convergence theorem. 
Therefore, the limit function $f$ is a well-defined $\mathbb{O}$-regular function over the whole domain $G$. Let us assume now that $f \not\equiv 0$ over $G$. Take an arbitrary point $z^* \in G$. In view of the identity theorem of left $\mathbb{O}$-regular functions (Proposition~\ref{identity}) there must exist a positive real $R > 0$ such that the closed ball $\overline{B(z^*,R)}$ is entirely contained inside $G$ and $M:= \min_{z \in S_7(z^*,R)} |f(z)| > 0$. Moreover, since $S_7(z^*,R)$ is compact there must exist an index $n_0 \in \mathbb{N}$ such that 
$$
\max_{z \in S_7(z^*,R)} |f(z)-f_n(z)| < M,\quad \forall n \ge n_0.
$$   
Summarizing, for all indices $n \ge n_0$, we have the inequality
$$
|f(z)-f_n(z)| < M \le |f(z)| \quad\quad \forall z \in S_7(z^*,R) 
$$	
which is the required condition of Rouch\'e's theorem in Theorem~\ref{rouche}. 

Now Rouch\'e's theorem tells us that 
$$
\sum\limits_{c \in S_7(z^*,R)} {\rm ord}(f;c) = \sum\limits_{c \in S_7(z^*,R)} \underbrace{{\rm ord}(f_n;c)}_{=0}.
$$
Note that since $f_n(z) \neq 0$ for all $z \in G$ we have ${\rm ord}(f_n;c)=0$ for all $n \in \mathbb{N}$. Since the points $z^*$ can be chosen arbitrarily inside of $G$, we can conclude that 
$$
\sum\limits_{c \in G} {\rm ord}(f;c)  = 0
$$
and the statement is proven.
\end{proof}
\begin{remark}
Note that in contrast to the complex analytic case, ord$(f;c) = 0$ does not guarantee that $f(c)\neq 0$, as pointed out in Remark~\ref{zero-order}. Therefore, we can only establish this weaker statement.
\end{remark}
\begin{remark}
In the context of other regularity concepts, such as for slice-regular octonionic functions and generalized octonionic holomorphic functions in the sense of S.V. Ludkovski, generalized statements of Rouch\'e and Hurwitz type could be established, see {\rm \cite{GPzeroes,L2007}}. 
\end{remark}

 \section{Rudiments for the treatment of non-isolated zeroes}
The following section presents results which are even new for quaternionic functions and paravector-valued functions in associative Clifford algebras. 

The aim is to present a meaningful definition of the order of zeroes or $a$-points of an $\mathbb{O}$-regular function that are not-isolated but lying on a $k$-dimensional simply connected compact manifold of dimension $1 \le k \le 6$, including in the simplest case compact algebraic varieties in eight variables.  

The case $k=0$ is the isolated case which has been treated in the previous section. As mentioned in the previous section, the case $k=7$ does not appear in the $\mathbb{O}$-regular setting, because of the  identity theorem for $\mathbb{O}$-regular functions (Proposition~\ref{identity}), which excludes this situation. Without loss of generality we focus on the treatment of compact varieties of zeroes, because varieties of $a$-points can be studied in the same way by looking at the function $f(z)-a$. 

Let us recall that in the isolated case one can always consider a small sphere around that zero with the property that no zeroes lie inside or on the boundary of that sphere. 

Let us now suppose that we have a $k$-dimensional simply-connected compact variety of zeroes ($k \le 6$), that we call $M$. To leave it simple we restrict ourselves in all that follows to those varieties that do not have auto-intersections.  

In the case of dealing with a variety of non-isolated zeroes with these properties,  
the proper analogue of a sphere surrounding an isolated point is a tubular domain of thickness $\varepsilon >0$ of the form
$$
T_M^{\varepsilon} :=\{z \in \mathbb{O} \backslash M \mid \min_{c \in M}\{|z-c|\} =\varepsilon\}.
$$   
In the case where $k = {\rm dim}\;M=1$ and where $M$ is a finite closed line segment, parametrized in the form $[\gamma] = \gamma(t),\quad t \in [0,1]$, the domain 
$$
T_{[\gamma]}^{\varepsilon} :=\{z \in \mathbb{O} \backslash M \mid \min_{t \in [0,1]}\{|z-\gamma(t)|\} =\varepsilon\}
$$ 
is nothing else than an ordinary symmetric circular tube of thickness $\varepsilon$ around that line segment. In the case where $M$ is a closed circle, the associated tubular domain $T_{[\gamma]}^{\varepsilon}$ is a generalized torus, more precisely it is homeomorphically equivalent to the real Hopf manifold $S_1 \times S_6 \cong \mathbb{R}^8 \backslash\{0\}/\mathbb{Z}$. A concrete example of a left and right $\mathbb{O}$-regular function where the zero set is up to atmost two isolated points  the unit circle lying in the subspace generated by $e_1$ and $e_2$ is the function $f(z)=Z_1^2+Z_2^2-1+\sum\limits_{j=3}^7 Z_j e_j$, where again $Z_i = x_i - x_0 e_i$ for all $i=1,\ldots,7$.   

In the particular case where $M$ is just an isolated point, say $M =\{z_0\}$, the tube then reduces to the set $T_{z_0} = \{z \in \mathbb{O} \mid |z-z_0| = \varepsilon\}$ which is the ordinary sphere $\partial B_8(z_0;\varepsilon)$ of the eight-dimensional ball. Thus, tubular domains provide us with a natural analogue for circular symmetric neighborhoods around closed simply connected manifolds with no auto-intersections.
 
In this framework, this is an adequate geometry to meaningfully introduce the notion of the order of a compact simply connected zero manifold of a left $\mathbb{O}$-regular function, generalizing the definitions given above for the isolated case. 
We introduce 
\begin{definition}
Suppose that $U \subseteq \mathbb{O}$ is a non-empty open set. Let $f:U \to \mathbb{O}$ be left $\mathbb{O}$-regular and suppose that $M$ is a compact simply connected manifold of dimension $k \in \{0,1\ldots,6\}$ with the above mentioned properties and with $M \subset U$ and $f(z)=0$ for all $z \in M$. Further assume that there is a real positive $\varepsilon > 0$ such that $T_M^{\varepsilon} \subset U$ and that $f(z) \neq 0$ for all $z \in T_M^{\varepsilon}$ and for all $z \in int T_M^{\varepsilon} \backslash M$.Then we can define the order of the non-isolated zero variety $M$ of $f$ by
\begin{eqnarray*}
	{\rm ord}(f;M) &:=& \frac{3}{\pi^4} \int\limits_{f(T_M^{\varepsilon})} q_{\bf 0}(w) d \sigma(w)\\
	&=& \frac{3}{\pi^4} \int\limits_{T_M^{\varepsilon}} q_{\bf 0}(f(z))  \cdot 
	\Bigg(  [(Jf)^{adj}(z)] \circledcirc [d\sigma(z)] \Bigg).
\end{eqnarray*}	 
\end{definition}   
\begin{remark}
	The integral counts how often the image of the tubular surface $T_M^{\varepsilon}$ under $f$ wraps around zero. 
	All zeroes belonging to the same zero variety $M$ have the same order, because the winding number is in view of its homotopic property a continuous and hence constant expression. The zero variety $M$ is simply-connected. Therefore ${\rm ord}(f;c_i) = {\rm ord}(f;c_j) = {\rm ord}(f;M)$ for all $c_i,c_j \in M$. 
\end{remark}
Notice further that the integral expressions are really well-defined because we do not integrate over any zeroes of $f$; $f(z) \neq 0$ for all $z \in  T_M^{\varepsilon}$.  

This generalized notion allows us to set up a generalized version of the octonionic argument principle where we now may admit left $\mathbb{O}$-regular functions having a finite number of compact simply-connected zero varieties $M_1,\ldots,M_p$ with no auto-intersections of dimension $k_1,\ldots,k_p$, respectively, lying inside a domain $G \subset \mathbb{O}$. We can prove
\begin{theorem} (Generalized octonionic argument principle for non-isolated zeroes)\\
	Let $G \subset \mathbb{O}$ be a domain. Suppose that $f:G \to \mathbb{O}$ is a left $\mathbb{O}$-regular function over $G$. Assume that $C$ is an $8$-dimensional oriented compact manifold $C \subset G$ whose boundary is parametrized by a $7$-dimensional null-homologous cycle $\Gamma$.  Furthermore, suppose that $f$ has a finite number of simply-connected closed zero varieties $M_1,\ldots,M_p$ with no auto-intersections of dimension $k_1,\ldots,k_p$, respectively, and that $f$ has no further zeroes inside of $C$ nor on its boundary $\Gamma$. Then we have 
	$$
	\frac{3}{\pi^4} \int\limits_{f(\Gamma)} q_{\bf 0}(w) d \sigma(w) = \sum\limits_{i=1}^p {\rm ord}(f;M_i).
	$$ 
\end{theorem}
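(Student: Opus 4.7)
The plan is to mirror the proof of the isolated-case argument principle, replacing small spheres $\partial B(c,\varepsilon)$ around isolated zeroes by tubular neighborhoods $T_{M_i}^{\varepsilon_i}$ around the zero varieties. The underlying idea is purely topological: both sides of the claimed identity compute winding numbers around $0$ of $7$-cycles in $\mathbb{O}\setminus\{0\}$, and the fact that $f$ is zero-free off the $M_i$ forces these winding numbers to add up correctly by a homology argument.

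First I would use compactness of each $M_i$ and of $C$, together with the hypothesis that $f$ has no further zeroes in $C$, to choose real numbers $\varepsilon_i > 0$ small enough that the closed tubes $\overline{\mathrm{int}\,T_{M_i}^{\varepsilon_i}}$ are pairwise disjoint, contained in the interior of $C$, and satisfy $f(z)\neq 0$ for every $z\in T_{M_i}^{\varepsilon_i}$ and every $z\in\mathrm{int}\,T_{M_i}^{\varepsilon_i}\setminus M_i$. This is the same shrinking argument as in the isolated case, now made possible because each $M_i$ is a compact simply connected variety without auto-intersections and the total zero set in $C$ is the finite union $\bigcup_i M_i$.

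Next I would consider the compact region $K := \overline{C}\setminus \bigcup_{i=1}^p \mathrm{int}\,T_{M_i}^{\varepsilon_i}$, whose oriented boundary is $\partial K = \Gamma - \sum_{i=1}^p T_{M_i}^{\varepsilon_i}$ with the tubes oriented outward from the tubular interiors containing $M_i$. Since $\Gamma$ is null-homologous in $C$ and the tubular interiors have been excised, $\partial K$ is a well-defined $7$-cycle. On $K$ the function $f$ takes no zero value, so $f$ restricts to a continuous map $K \to \mathbb{O}\setminus\{0\}$. The pushforward then produces an $8$-chain in $\mathbb{O}\setminus\{0\}$ whose boundary is $f(\Gamma) - \sum_{i=1}^p f(T_{M_i}^{\varepsilon_i})$. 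Consequently this difference is null-homologous in $\mathbb{O}\setminus\{0\}$, and because the winding number $w_{\cdot}(0)$ is a homology invariant via the isomorphism $H_7(\mathbb{O}\setminus\{0\}) \cong \tilde{H}_7(S_7) \cong \mathbb{Z}$ already invoked in the excerpt, we obtain
$$
w_{f(\Gamma)}(0) = \sum_{i=1}^p w_{f(T_{M_i}^{\varepsilon_i})}(0).
$$

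To conclude, I would apply Theorem~\ref{topcauchy} in the special case $h\equiv 1$ to the cycle $f(\Gamma)$, which gives $\frac{3}{\pi^4}\int_{f(\Gamma)} q_{\mathbf 0}(w)\,d\sigma(w) = w_{f(\Gamma)}(0)$, and observe that by definition $\mathrm{ord}(f;M_i) = w_{f(T_{M_i}^{\varepsilon_i})}(0)$. Substituting yields the claimed equality. The main obstacle I expect is making the bookkeeping of orientations on $\partial K$ rigorous and verifying that the pushforward chain $f_*(K)$ genuinely furnishes a $7$-boundary in $\mathbb{O}\setminus\{0\}$; once this is in place the argument is essentially a single application of the topological Cauchy formula combined with the additivity of winding numbers. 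Independence of the auxiliary choice of $\varepsilon_i$ follows because any two admissible choices produce homotopic tubular cycles in the zero-free region, hence the same winding number.
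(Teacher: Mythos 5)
Your proposal is correct and follows essentially the same route as the paper: excise tubular neighborhoods of the $M_i$ from $C$, use the fact that $f$ is zero-free on the remaining region to equate the boundary integral over $\Gamma$ with the sum of the integrals over the tubes $T_{M_i}^{\varepsilon_i}$, and identify each tube integral with ${\rm ord}(f;M_i)$. The only (cosmetic) difference is that you justify the excision step homologically, via the pushforward chain $f_*(K)$ and additivity of winding numbers, whereas the paper phrases the same step analytically as the vanishing of the Cauchy integral over the boundary of the zero-free region; your version is, if anything, slightly more explicit about why the decomposition works.
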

\begin{proof}
Since $f$ has no zeroes on $\Gamma$ and since $C$ is compact the following integral and integral transformation is well-defined:

\begin{equation}\label{preveq}
\frac{3}{\pi^4}  \int\limits_{f(\Gamma)} q_{\bf 0}(w)d\sigma(w) = \frac{3}{\pi^4}  \int\limits_{\Gamma} q_{\bf 0}(f(z))  \cdot 
\Bigg(  [(Jf)^{adj}(z)] \circledcirc [d\sigma(z)] \Bigg).
\end{equation}
Since $f$ has no zeroes in $C \backslash \bigcup_{i=1}^p M_i$, we have that $\sum\limits_{c \in C \backslash \bigcup_{i=1}^p M_i} {\rm ord}(f;c) = 0$, so that   
the latter integral from (\ref{preveq}) can be expressed in the form 
$$
\frac{3}{\pi^4}\sum\limits_{i=1}^p\int\limits_{T_{M_i}^{\varepsilon_i}} q_{\bf 0}(f(z))  \cdot 
\Bigg(  [(Jf)^{adj}(z)] \circledcirc [d\sigma(z)] \Bigg) = \sum\limits_{i=1}^p {\rm ord}(f;M_i),
$$
because the contribution of this integral over the boundary of a domain that contains no zeroes inside is zero. 
\end{proof}
\begin{remark}
	The statement remains valid in the Clifford analysis setting, addressing paravector-valued functions with zero varieties that have the above mentioned properties. 
\end{remark}
\section{Perspectives}
The previous section suggests an approach how to address orders of non-isolated zeroes of octonionic regular or Clifford monogenic functions in the sense of the Riemann approach. A further step would consist in applying this argument principle to establish generalizations of Rouch\'e's theorem and Hurwitz' theorem to the non-isolated context. Obviously, the geometric conditions claimed in the previous section are very strong. As mentioned in Section~3 it is very easy to also construct $\mathbb{O}$-regular functions that have zero varieties of infinite extension. If we want to address  varieties with auto-intersections, then we have to adapt the use of tubular domains.  An important question is to investigate which genus do the arising zero manifolds have in the most general case. To get some insight in these kinds of questions a profound study of algebraic geometrical methods, in particular a deep study of understanding the nature of the appearing zero varities of $\mathbb{O}$-regular functions is required. Working on the intersection of algebraic geometry and hypercomplex function theories represents a promising branch for future investigation.

Furthermore, this paper shows that the argument principle is more a topological theorem than an analytic one, although the Cauchy kernel is explicitly needed in its definition. However, the predominant topological character gives the hope that these kinds of theorems can be carried over to many more hypercomplex function theories, in particular to the context of null-solutions to other differential equations. However, a really substantial question is to ask whether these tools can be carried over to functions that are defined in other algebras beyond octonions and paravector-valued subspaces of Clifford algebras. Both paravector spaces and octonions are normed and are free of zero-divisors. Following K. Imaeda \cite{Imaeda}, already in the context of sedenions it is not anymore possible to set up a direct analogue of Cauchy's integral formula. Cauchy's integral formula however is the basic tool for establishing all these results. The appearance of zero divisors will also have an impact on the topological properties. There remain a lot of open questions and challenges for future research.

\end{document}